\documentclass{amsart}
\usepackage{graphicx}
\usepackage{hyperref}
\usepackage{amsmath,amssymb,amsfonts,amscd,amsthm}
\usepackage{MnSymbol}
\usepackage{tikz-cd}
\usepackage{pgfplots}
\pgfplotsset{compat=1.18}
\numberwithin{equation}{section}
\numberwithin{figure}{section}

\DeclareMathOperator{\Gal}{Gal}

\DeclareMathOperator{\Aut}{Aut}
\DeclareMathOperator{\PrePer}{PrePer}

\theoremstyle{plain}
\newtheorem{lem}{\protect\lemmaname}[section]
\theoremstyle{plain}
\newtheorem{thm}{\protect\theoremname}[section]
\theoremstyle{plain}
\newtheorem{prop}{\protect\propname}[section]
\theoremstyle{plain}
\newtheorem{conj}{\protect\conjname}[section]
\theoremstyle{plain}
\newtheorem{cor}{\protect\corname}[section]
\theoremstyle{definition}
\newtheorem{defn}{\protect\defname}[section]
\theoremstyle{definition}
\newtheorem{exam}{\protect\examname}[section]
\theoremstyle{remark}

\providecommand{\lemmaname}{Lemma}
\providecommand{\remarkname}{Remark}
\providecommand{\theoremname}{Theorem}
\providecommand{\propname}{Proposition}
\providecommand{\defname}{Definition}
\providecommand{\conjname}{Conjecture}
\providecommand{\corname}{Corollary}
\providecommand{\examname}{Example}

\title{Arboreal Galois groups of rational maps with nonreal Julia sets}
\author{Chifan Leung}
\date{November 2024}
\address{Chifan Leung, Beijing International Center for Mathematical Research, Peking University, Yiheyuan Road, Beijing China, 100871}

\begin{document}
\begin{abstract}
We prove a non-abelian arboreal Galois group result for certain maps with non-real Julia set at an archimedean place. We investigate the question of determining which polynomials defined over $\mathbb{R}$ have real Julia set. Finally we show that for some certain classes of Lattès maps associated to the duplication map on an elliptic curve has non-abelian arboreal Galois groups.
\end{abstract}
\maketitle

\section{Introduction}
Let $K$ be a number field with algebraic closure $\bar{K}.$ Let $f\in K(X)$ be a rational map of degree $d\geq2,$ look at this $f$ as a dynamical system $f:\mathbb{P}^{1}(\bar{K})\rightarrow\mathbb{P}^{1}(\bar{K}).$ Denote $f^{n}=f\circ f\circ\cdots\circ f$ the map $f$ composed with itself $n$ times.
 
Fix a point $\alpha\in\mathbb{P}^{1}(K)$ and assume $\alpha$ is not an exceptional point of the map $f,$ this means that the backward orbit $f^{-\infty}(\alpha)=\bigcup_{n\geq0}f^{-n}(\alpha)$ of $\alpha$ is an infinite set. For each $n\geq1,$ define the $n$-th inverse image of the pair $(f,\alpha)$ by $$f^{-n}(\alpha)=\{\beta\in\mathbb{P}^{1}(\bar{K}):f^{n}(\beta)=\alpha\}.$$ For each $n\geq1,$ let $K_{n}=K_{n}(f,\alpha)$ be the field generated over $K$ by $f^{-n}(\alpha).$

As $f$ is defined over $K,$ and generators of $K_{n}$ are $f$-images of elements in $K_{n+1},$ it follows that $K=K_{0}\leq K_{1}\leq\cdots$ is a tower of Galois extensions of $K,$ and we define $$K_{\infty}=K_{\infty}(f,\alpha)=\bigcup_{n\geq1}K_{n}(f,\alpha).$$

The Galois group of the extension $K_{\infty}/K$ acts faithfully on an infinite rooted tree $T_{\infty}$ whose vertices are indexed by points in $f^{-\infty}(\alpha).$

\begin{defn}
The resulting injective homomorphism $$\rho:\Gal(K_{\infty}/K)\hookrightarrow\Aut(T_{\infty})$$ is known as an \emph{arobreal Galois representation} associated to $(f,\alpha).$
\end{defn}

The first mathematician to consider $\rho:\Gal(K_{\infty}/K)\rightarrow\Aut(T_{\infty})$ was Odoni, motivated by a question in elementary number theory on prime divisors of recursively defined sequences. There is an analogy with $p$-adic Galois representations associated to elliptic curves and abelian varieties as well.

Much of the early work in the study of arboreal Galois representations was about identifying examples of large image, namely to prove $\rho:\Gal(K_{\infty}/K)\rightarrow\Aut(T_{\infty})$ is surjective, or prove that $\rho(\Gal(K_{\infty}/K))$ is a finite index subgroup in $\Aut(T_{\infty}).$ See Odoni \cite{MR805714}, \cite{MR813379}, Stoll \cite{MR1174401} and Jones \cite{MR3220023} for more information.

We are interested not in large image results, but rather small image results in a sense of the following conjecture by Andrews-Petsche. They conjectured that the arboreal Galois group is abelian if and only if the polynomial is conjugated to either a powering map or to a Chebyshev map.

\begin{defn}
Let $L/K$ be extension of fields. Let $f,g\in K[X]$ and let $\alpha,\beta\in K,$ we say two pairs $(f,\alpha)$ and $(g,\beta)$ are \emph{$L$-conjugate} if $g=\varphi\circ f\circ\varphi^{-1}$ and $\beta=\varphi(\alpha)$ for some affine automorphism $\varphi=aX+b\in L[X]$ with $a\ne0.$
\end{defn}

\begin{conj} [Andrews-Petsche \cite{arxiv2001.00659}] \label{Andrews Petsche}
Let $f\in K[X]$ be a polynomial of degree $d\geq2$ defined over a number field $K,$ and let $\alpha\in K$ be a non-exceptional point for $f.$ Then $K_{\infty}/K$ is an abelian extension if and only if $(f,\alpha)$ is $K^{\mathrm{ab}}$-conjugate to $(g,\beta)$ occurring in one of the two families of examples:
\begin{enumerate}
\item $(g,\beta)=(X^{d},\zeta)$ where $\zeta$ is a root of unity in $\bar{K}.$
\item $(g,\beta)=(\pm T_{d},\zeta+\zeta^{-1})$ where $T_{d}$ is the degree $d$ Chebyshev polynomial.
\end{enumerate}
\end{conj}

Conjecture \ref{Andrews Petsche} remains open, but some partial results have been established. We recall $f$ is \emph{postcritically finite}, or \emph{PCF} for short, if all of its critical points have finite forward orbit, or equivalently all of its critical points are preperiodic.

\begin{enumerate}
\item Conjecture \ref{Andrews Petsche} is true if $f$ is $\bar{K}$ conjugate to a powering or Chebyshev map by Andrews-Petsche \cite{arxiv2001.00659}.
\item Conjecture \ref{Andrews Petsche} is true if $K=\mathbb{Q},$ due to results of Ostafe \cite{Ostafe:intersection} and Ferraguti-Ostafe-Zannier \cite{arxiv2203.10034} in combination of Kronecker-Weber theorem.
\item Conjecture \ref{Andrews Petsche} is true if $f$ is not postcritically finite, due to Ferraguti-Ostafe-Zannier \cite{arxiv2203.10034}, that $\Gal(K_{\infty}/K)$ is not abelian if $f$ is not PCF.
\end{enumerate}

We prove the following result, which is stated in Theorem \ref{nonperiodic archimedean} for convenience, that if we can find a real archimedean place of a number field such that the Julia set is not contained in $\mathbb{P}^{1}(\mathbb{R}),$ it follows that $\Gal(K_{\infty}/K)$ is not abelian.

\begin{thm} \label{main theorem}
Let $K$ be a number field, and $v$ a real archimedean place of $K$ corresponding to a map $\sigma:K\hookrightarrow\mathbb{R}.$ Let $f\in K(X)$ be a rational map of degree $\geq2,$ and let $\alpha\in\mathbb{P}^{1}(K)$ be a nonperiodic point, and assume the following.
\begin{enumerate}
\item The rational map $f_{\sigma}:\mathbb{P}^{1}(\mathbb{R})\rightarrow\mathbb{P}^{1}(\mathbb{R})$ is surjective, where $f_{\sigma}$ is the polynomial in $\mathbb{R}[X]$ defined by applying $\sigma$ to the coefficients of $f.$
\item The Julia set of $f_{\sigma}$ is not contained in $\mathbb{P}^{1}(\mathbb{R}).$
\end{enumerate}
Then $\Gal(K_{\infty}/K)$ is not abelian.
\end{thm}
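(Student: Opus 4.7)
The plan is to argue by contradiction: assume that $\Gal(K_{\infty}/K)$ is abelian. Fix an extension $\tilde{\sigma}\colon\bar{K}\hookrightarrow\mathbb{C}$ of $\sigma$ and let $c\in\Aut(\bar{K}/K)$ denote the automorphism induced by complex conjugation via $\tilde{\sigma}$. First I would establish a \emph{reality lemma}: under the abelian hypothesis the restriction $c_{\sigma}:=c|_{K_{\infty}}$ does not depend on the choice of $\tilde{\sigma}$ (two extensions of $\sigma$ produce elements of $\Gal(K_{\infty}/K)$ that differ by conjugation and hence are equal) and lies in the centre of $\Gal(K_{\infty}/K)$. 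Consequently, for every $\tau\in\Gal(K_{\infty}/K)$ and $\beta\in K_{\infty}$ one has $\tilde{\sigma}(\tau\beta)\in\mathbb{R}$ iff $\tilde{\sigma}(\beta)\in\mathbb{R}$; equivalently, the subset $R_{\infty}:=\{v\in T_{\infty}:\tilde{\sigma}(v)\in\mathbb{R}\}$ is $\Gal(K_{\infty}/K)$-invariant, and every Galois orbit on $f^{-n}(\alpha)$ is either entirely $\tilde{\sigma}$-real or entirely non-real.

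Next I would use the hypotheses to locate a reality transition in the tree. Because $\alpha$ is nonperiodic in a degree-$\geq 2$ system whose exceptional set consists of periodic points, $\alpha$ is non-exceptional, and so $f^{-\infty}(\alpha)$ is dense in $J(f_{\sigma})$. By Hypothesis~(2), some $n\geq 1$ and some $\beta\in f^{-n}(\alpha)$ satisfy $\tilde{\sigma}(\beta)\notin\mathbb{R}$; taking $n$ minimal, the vertex $\gamma:=f(\beta)\in f^{-(n-1)}(\alpha)$ is $\tilde{\sigma}$-real while $\beta\in f^{-1}(\gamma)$ is not. Hypothesis~(1), the surjectivity of $f_{\sigma}$ on $\mathbb{P}^{1}(\mathbb{R})$, then supplies a $\tilde{\sigma}$-real $\beta_{0}\in f^{-1}(\gamma)$, so that the fiber $f^{-1}(\gamma)$ contains both a real and a non-real preimage.

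To reach the contradiction it would suffice to produce $\tau\in\Gal(K_{\infty}/K)$ with $\tau(\beta_{0})=\beta$, for then the Galois orbit of the real $\beta_{0}$ would contain the non-real $\beta$, violating the reality lemma. The most direct route is to show that $f(X)-\gamma$ is irreducible over $K(\gamma)$, so that $\Gal(K_{\infty}/K(\gamma))\leq\Gal(K_{\infty}/K)$ acts transitively on $f^{-1}(\gamma)$ and yields the required $\tau$. The main obstacle is precisely this irreducibility: for ``exceptional'' choices of $\gamma$ (critical values of $f$, or situations in which $f(X)-\gamma$ admits a Ritt-style decomposition over $K(\gamma)$) the polynomial can factor over $K(\gamma)$. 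The remedy I would pursue is to descend deeper into the tree: by Hypothesis~(2) non-real preimages are dense in $J(f_{\sigma})\setminus\mathbb{P}^{1}(\mathbb{R})$, while at each level only a thin locus of $\gamma$ gives a reducible $f(X)-\gamma$ over $K(\gamma)$ (a Hilbert-irreducibility-style statement about $f(X)-T\in K(T)[X]$). Combining these two density properties should single out a $\gamma\in f^{-m}(\alpha)$ at some level $m$ for which $f(X)-\gamma$ is irreducible over $K(\gamma)$ and whose fiber still contains both real and non-real preimages, closing the argument. Making this selection rigorous, uniformly in the rational map $f$ and the algebraic $\alpha$, is where I expect the technical heart of the proof to lie.
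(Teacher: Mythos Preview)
Your reality lemma is correct and is exactly the algebraic input the paper isolates (its Lemma on abelian splitting fields: if $\Gal(K_\infty/K)$ is abelian then any $\beta\in K_\infty$ with $\tilde\sigma(\beta)\in\mathbb{R}$ has all its $K$-conjugates $\tilde\sigma$-real). Your setup locating a real $\gamma\in f^{-(n-1)}(\alpha)$ with both a real preimage $\beta_0$ and a non-real preimage $\beta$ is also fine.

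The gap is the step you yourself flag as the ``technical heart'': forcing $\beta_0$ and $\beta$ into the same $\Gal(K_\infty/K)$-orbit. Your proposed Hilbert-irreducibility workaround does not close this. The specialisations $\gamma$ you are allowed to use form the countable set $f^{-\infty}(\alpha)$, and there is no mechanism preventing every such $\gamma$ from lying in the thin exceptional locus where $f(X)-\gamma$ factors over $K(\gamma)$; Hilbert irreducibility gives density in $K$, not a hit on a prescribed countable set. For particular maps (e.g.\ Latt\`es or Chebyshev-type) the fibre $f^{-1}(\gamma)$ can systematically split over $K(\gamma)$ for all algebraic $\gamma$, so this is not a removable technicality.

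The paper sidesteps the transitivity issue completely by using equidistribution rather than a single fibre. By surjectivity of $f_\sigma$ on $\mathbb{P}^1(\mathbb{R})$ one chooses a backward sequence $\alpha=\alpha_0,\alpha_1,\alpha_2,\dots$ with $f(\alpha_{n+1})=\alpha_n$ and $\tilde\sigma(\alpha_n)\in\mathbb{R}$; nonperiodicity of $\alpha$ makes the $\alpha_n$ distinct, and $\hat h_f(\alpha_n)=d^{-n}\hat h_f(\alpha)\to 0$. Your own reality lemma then says each Galois orbit of $\alpha_n$ is entirely $\tilde\sigma$-real, so the probability measures $[\alpha_n]_v$ are all supported on $\mathbb{P}^1(\mathbb{R})$. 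The arithmetic equidistribution theorem of Baker--Rumely, Chambert-Loir, and Favre--Rivera-Letelier forces $[\alpha_n]_v\Rightarrow\mu_{f,v}$, and weak limits of measures supported on the compact set $\mathbb{P}^1(\mathbb{R})$ are again supported there; hence $\mathrm{J}(f_\sigma)=\mathrm{supp}(\mu_{f,v})\subseteq\mathbb{P}^1(\mathbb{R})$, contradicting hypothesis~(2). No irreducibility of $f(X)-\gamma$ is ever needed.
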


The proof of Theorem \ref{main theorem} uses the fact that if $\beta\in\bar{K}$ is in an abelian extension of a number field $K,$ then for any place $v$ of $K,$ the completion $K_{v}$ has either all or none of $\Gal(\bar{K}/K)$-conjugates of $\beta,$ so it is impossible to have partial splitting in the case if $\Gal(K_{v}/K)$ is abelian. We will prove this in Lemma \ref{all root in it}.

With this tool, we can characterize certain classes of polynomials such that the arboreal Galois group is not abelian.

We begin by stating the definition of critical interval of a polynomial $f\in\mathbb{R}[X].$ We will see in Theorem \ref{Julia equivalence} that any nonempty set of real preperiodic points $\PrePer(f,\mathbb{R})$ of a polynomial $f\in\mathbb{R}[X]$ if and only if its Julia set is real.

\begin{defn}
If $f\in\mathbb{R}[X]$ is a degree $d\geq2$ polynomial with real coefficients, denote $\mathrm{I}(f)$ the \emph{critical interval,} it is the set of all $t\in\mathbb{R}$ such that $f(X)=t$ has $d$ real solutions counted with multiplicity.
\end{defn}

In particular we can prove these propositions, we state again for convenience. We will prove these as Propositions \ref{odd positive}, \ref{odd negative}, \ref{even positive}, \ref{even negative} respectively.

\begin{prop}
Let $f\in\mathbb{R}[X]$ have odd degree $d\geq3$ with positive lead coefficient. Then $\mathrm{J}(f)\subseteq\mathbb{R}$ if and only if $\mathrm{Fix}(f,\mathbb{R})\subseteq\mathrm{I}(f).$
\end{prop}

\begin{prop}
Let $f\in\mathbb{R}[X]$ have odd degree $d\geq3$ with negative lead coefficient. Then $\mathrm{J}(f)\subseteq\mathbb{R}$ if and only if $\mathrm{Fix}(f^{2},\mathbb{R})\subseteq\mathrm{I}(f^{2}).$
\end{prop}

\begin{prop}
Let $f\in\mathbb{R}[X]$ have even degree $d\geq2$ with positive lead coefficient. Then $\mathrm{J}(f)\subseteq\mathbb{R}$ if and only if $f$ has at least one real fixed point and $[a_{1},a_{2}]\subseteq\mathrm{I}(f),$ where $a_{2}$ is the largest real fixed point, and $a_{1}$ is the smallest real root of $f(X)=a_{2}.$
\end{prop}

\begin{prop}
Let $f\in\mathbb{R}[X]$ have even degree $d\geq2$ of negative lead coefficient. Then $\mathrm{J}(f)\subseteq\mathbb{R}$ if and only if $f$ has at least one real fixed point and $[a_{1},a_{2}]\subseteq\mathrm{I}(f),$ where $a_{1}$ is the smallest real fixed point, and $a_{2}$ is the largest real root of $f(X)=a_{1}.$
\end{prop}

In Section $6$ we show that for some certain classes of Lattès maps associated to the duplication map on an elliptic curve $E,$ if the Weierstrass equation of $y^{2}=F(x)$ has negative discriminant, then any nonperiodic point of a Lattès map has nonabelian arboreal Galois group. First we recall the definition of a Lattès map.

\begin{defn}
We say $f:\mathbb{P}^{1}\rightarrow\mathbb{P}^{1}$ is a \emph{Lattès map} if there is an elliptic curve $E,$ a morphism $\psi:E\rightarrow E$ and a map $\pi: E\rightarrow\mathbb{P}^{1}$ such that the diagram commmutes \begin{equation}
\begin{tikzcd}
E\arrow[r,"\psi"]\arrow[d,"\pi"]&E\arrow[d,"\pi"]\\\mathbb{P}^{1}\arrow[r,"f"]&\mathbb{P}^{1}.
\end{tikzcd}
\end{equation}
\end{defn}

\begin{cor}
Let $K$ be a number field, and $E$ an elliptic curve defined over $K.$ Let $y^{2}=F(x)$ be a Weierstrass equation for $E,$ where $F\in\mathbb{R}[x]$ has degree three. Suppose that $\mathrm{disc}(F)<0.$  If $f:\mathbb{P}^{1}\rightarrow\mathbb{P}^{1}$ is a Lattès map associated to $[2]:E\rightarrow E$ and $x:E\rightarrow\mathbb{P}^{1}$ such that $x\circ[2]=f\circ x,$ and if $\alpha\in\mathbb{R}$ is a nonperiodic point of $f,$ then $\Gal(K_{\infty}/K)$ is not abelian.
\end{cor}

\section{Results on equidistribution of dynamically small points}
We briefly review the notion of the canonical height associated to a rational map, and the notion of local equidistribution of dynamically small points.

\begin{defn}
If $f$ is a degree $d\geq2$ rational map defined over a number field $K,$ the \emph{Call Silverman canonical height} for $f$ is the map $\hat{h}_{f}:\mathbb{P}^{1}(\bar{K})\rightarrow\mathbb{R}$ given by $$\hat{h}_{f}(x)=\lim_{n\rightarrow\infty}\frac{h(f^{n}(x))}{d^{n}},$$
where $h$ is the Weil height.
\end{defn}

\begin{prop} \label{canonical height}
The canonical height $\hat{h}_{f}$ is characterized by the two properties.
\begin{enumerate}
\item For all $x\in\mathbb{P}^{1}(\bar{K})$ one has $\hat{h}_{f}(x)=h(x)+O(1).$
\item For all $x\in\mathbb{P}^{1}(\bar{K})$ one has $\hat{h}_{f}(f(x))=d\hat{h}_{f}(x).$
\end{enumerate}
The implied constant $O(1)$ depends on $f$ but it is independent of $x.$

Moreover $\hat{h}_{f}(x)\geq0$ for all $x\in\mathbb{P}^{1}(\bar{K}),$ equality if and only if $x$ is $f$-preperiodic.
\end{prop}

\begin{proof}
This is proven in Silverman \cite{MR2316407} Theorem $3.11.$ The existence of this limit uses property $(1)$ along with a telescoping series argument due to Tate.
\end{proof}

For any place $v$ of a number field $K,$ let $\mathbb{P}^{1}_{\mathrm{Berk},v}$ be the Berkovich projective line. See Benedetto \cite{MR3890051} for more background.

\begin{itemize}
\item If $v$ is archimedean, then $\mathbb{C}_{v}=\mathbb{C}$ and $\mathbb{P}^{1}_{\mathrm{Berk},v}=\mathbb{P}^{1}(\mathbb{C})$ is the Riemann sphere.
\item If $v$ is non-archimedean, then $\mathbb{P}^{1}_{\mathrm{Berk},v}$ is a strictly larger analytic compactification of the ordinary projective line $\mathbb{P}^{1}(\mathbb{C}_{v}).$
\end{itemize}

\begin{defn}
Let $K$ be a number field and $v$ be a place of $K.$ For any $\alpha\in\mathbb{P}^{1}(\bar{K}),$ denote by $[\alpha]_{v}$ the Borel probability measure on $\mathbb{P}^{1}_{\mathrm{Berk},v},$ supported equally on each of the $[K(\alpha): K]$ distinct embeddings of $\alpha$ into $\mathbb{P}^{1}(\mathbb{C}_{v})$ induced by $K(\alpha)\hookrightarrow\mathbb{C}_{v}.$ Explicitly $[\alpha]_{v}$ is the probability measure $$[\alpha]_{v}=\frac{1}{[K(\alpha): K]}\sum_{\sigma: K(\alpha)\hookrightarrow\mathbb{C}_{v}}\delta_{\sigma(\alpha)},$$ where for each $t\in\mathbb{P}^{1}_{\mathrm{Berk},v},$ define $\delta_{t}$ as the Dirac measure on $\mathbb{P}^{1}_{\mathrm{Berk},v}$ supported at $t.$ In particular if $\varphi:\mathbb{P}^{1}_{\mathrm{Berk},v}\rightarrow\mathbb{R}$ is continuous it follows that $$\int\varphi d[\alpha]_{v}=\frac{1}{[K(\alpha): K]}\sum_{\sigma: K(\alpha)\hookrightarrow\mathbb{C}_{v}}\varphi(\sigma(\alpha)).$$
\end{defn}

The following is the arithmetic equidistribution theorem, proven independently by Baker-Rumely \cite{arxiv0407426}, Chambert-Loir \cite{arxiv0304023}, and Favre-Rivera-Letelier \cite{arxiv0407471}, says that for any rational map $f\in K(x)$ of degree at least $2,$ and any place $v,$ there exists a unit Borel measure $\mu_{f,v}$ on $\mathbb{P}^{1}_{\mathrm{Berk},v}$ which describes the limiting distribution on $\Gal(\bar{K}/K)$-orbits of dynamically small points of $f.$

\begin{thm} [Baker-Rumely \cite{arxiv0407426}, Chambert-Loir \cite{arxiv0304023}, Favre-Rivera-Letelier \cite{arxiv0407471}] \label{non-archimedean equidistribution}
Let $v$ be a place of a number field $K,$ and $f\in K(X)$ a rational map degree $d\geq2.$ There exists a Borel probability measure $\mu_{f,v}$ on $\mathbb{P}^{1}_{\mathrm{Berk},v}$ with the following property: If $\{\alpha_{n}\}$ is an infinite sequence of distinct points in $\mathbb{P}^{1}(\bar{K})$ such that $\hat{h}_{f}(\alpha_{n})\rightarrow0,$ then the sequence $[\alpha_{n}]_{v}$ converges weakly to $\mu_{f,v}$ as $n\rightarrow\infty.$ In other words, $$\lim_{n\rightarrow\infty}\int\varphi d[\alpha_{n}]_{v}=\int \varphi d\mu_{f,v}$$ for each continuous map $\varphi:\mathbb{P}^{1}_{\mathrm{Berk},v}\rightarrow\mathbb{R}$ defined on the Berkovich projective line.
\end{thm}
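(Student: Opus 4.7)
The plan is to establish this equidistribution result in three stages: construct the limiting measure $\mu_{f,v}$, set up a local Green's function and energy pairing on $\mathbb{P}^{1}_{\mathrm{Berk},v}$, and then derive weak convergence from the small-height hypothesis via a Fekete-Szegő style argument at all places simultaneously.

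First I would construct $\mu_{f,v}$ as a pullback limit. Choose a convenient probability measure $\nu$ on $\mathbb{P}^{1}_{\mathrm{Berk},v}$ (the Dirac at the Gauss point when $v$ is non-archimedean, or any measure avoiding exceptional points when $v$ is archimedean) and set $\mu_{f,v}=\lim_{n\to\infty}d^{-n}(f^{n})^{*}\nu$. In the archimedean case this is the classical Brolin-Lyubich construction of the equilibrium measure; in the non-archimedean case one invokes the potential-theoretic machinery of Favre-Rivera-Letelier (or Baker-Rumely) on the Berkovich line. The resulting measure satisfies $f^{*}\mu_{f,v}=d\cdot\mu_{f,v}$ and is independent of the auxiliary choice of $\nu$.

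Second I would introduce a local canonical height (Green's function) $g_{f,v}$, a suitably regular function on $\mathbb{P}^{1}_{\mathrm{Berk},v}$ satisfying the functional equation $g_{f,v}\circ f=d\cdot g_{f,v}$ up to an additive constant, together with $\Delta g_{f,v}=\mu_{f,v}-\delta_{\zeta_{0}}$ for a fixed base point $\zeta_{0}$, where $\Delta$ denotes the Berkovich Laplacian. Summing over all places yields the standard adelic decomposition
\[
\hat{h}_{f}(\alpha)=\frac{1}{[K(\alpha):K]}\sum_{w}[K_{w}:\mathbb{Q}_{w}]\int g_{f,w}\,d[\alpha]_{w},
\]
which is the conduit through which the arithmetic hypothesis $\hat{h}_{f}(\alpha_{n})\to0$ enters the geometric equidistribution statement. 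Now given a test function $\varphi$ continuous on $\mathbb{P}^{1}_{\mathrm{Berk},v}$, I would exploit the density of differences of potentials of continuous measures to reduce the weak-convergence assertion $\int\varphi\,d[\alpha_{n}]_{v}\to\int\varphi\,d\mu_{f,v}$ to a statement about the mutual energy pairing
\[
I_{v}(\nu_{1},\nu_{2})=-\iint\log\delta_{v}(x,y)\,d\nu_{1}(x)\,d\nu_{2}(y),
\]
where $\delta_{v}$ is the Berkovich chordal distance. The measure $\mu_{f,v}$ is characterized as the unique minimizer of a suitable energy functional in its class.

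Third, the proof of weak convergence combines an upper bound from the small canonical heights with a Fekete-Szegő lower bound. The bound $\hat{h}_{f}(\alpha_{n})\to0$, unwound through the sum-of-places formula, forces the global self-energies of the $[\alpha_{n}]_{v}$ to tend to $\sum_{v}I_{v}(\mu_{f,v})$ from above; passing to a weak subsequential limit $\mu'$ at each $v$ and using lower semicontinuity of $I_{v}$, the strict subadditivity of the energy functional forces $\mu'=\mu_{f,v}$ at every place, and the convergence is therefore along the full sequence. The main obstacle I anticipate is precisely the non-archimedean Fekete-Szegő inequality: one needs a uniform positivity statement for the off-diagonal energy $I_{v}([\alpha_{n}]_{v})$ coming from the fact that the $\alpha_{n}$ have pairwise distinct conjugates. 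Making this rigorous requires the full machinery of Laplacians and energy on the Berkovich tree, including lower semicontinuity of $I_{v}$ and a Frostman-type characterization of the equilibrium measure in this setting, which is the technical core contributed by Baker-Rumely, Chambert-Loir, and Favre-Rivera-Letelier.
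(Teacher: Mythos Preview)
The paper does not supply its own proof of this theorem: it is stated as a known result and attributed to Baker--Rumely, Chambert-Loir, and Favre--Rivera-Letelier via citations, with no argument given. So there is nothing in the paper to compare your proposal against.

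Your sketch is a reasonable high-level outline of how the cited works actually proceed (construction of the canonical measure as a pullback limit, local Green's functions and an adelic height decomposition, and an energy/Fekete--Szeg\H{o} argument to force equidistribution), and nothing in it is obviously wrong as a roadmap. But since the paper treats this theorem as a black box, the appropriate response here is simply to cite the original sources rather than to reproduce the proof; your proposal goes well beyond what the paper itself does.
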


\section{Preliminaries on dynamics of polynomials}
In this section, we give an overview of complex dynamics, such as the definitions and properties of Fatou and Julia sets. Details may be found in \cite{beardon:gtm} Chapter $3.$

\begin{defn}
Let $\mathcal{F}$ be a family of maps from metric spaces $(X,d_{X})$ to $(Y,d_{Y}).$ We say that $\mathcal{F}$ is \emph{equicontinuous} at a point $\alpha\in X$ if for all $\varepsilon>0,$ there exists $\delta>0$ such that whenever $d_{X}(\alpha,\beta)<\delta$ one has $d_{Y}(f(\alpha),f(\beta))<\varepsilon$ for all $f\in\mathcal{F}.$
\end{defn}

The following result justifies the existence of maximal open subset such that the family of iterates is equicontinuous. It follows that the Fatou set is well-defined.

\begin{thm}
If $\mathcal{F}$ is a family of maps between metric spaces $(X,d_{X})$ and $(Y,d_{Y}),$ then there is a maximal open subset $U$ in $X$ such that $\mathcal{F}$ is equicontinuous on $U.$ In particular if $f$ is a self mapping on $(X,d_{X}),$ there is a maximal open subset $U$ of $X$ on which the family of iterates $\{f^{n}:n\geq1\}$ is equicontinuous on $U.$
\end{thm}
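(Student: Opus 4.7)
The plan is to construct the maximal open set directly as a union of open sets on which $\mathcal{F}$ is equicontinuous, using the fact that equicontinuity is defined pointwise. Let $\mathcal{U}$ denote the collection of all open subsets $V \subseteq X$ such that $\mathcal{F}$ is equicontinuous at every point of $V$. The collection $\mathcal{U}$ is nonempty because the empty set trivially belongs to it. Set
\[
U = \bigcup_{V \in \mathcal{U}} V.
\]
As a union of open sets, $U$ is open.

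The next step is to verify that $\mathcal{F}$ is equicontinuous at each point of $U$. Given $\alpha \in U$, by the definition of the union there exists some $V \in \mathcal{U}$ containing $\alpha$; since $\mathcal{F}$ is equicontinuous at every point of $V$ by the definition of $\mathcal{U}$, it is in particular equicontinuous at $\alpha$. Thus $U$ itself lies in $\mathcal{U}$. Since $U$ contains every member of $\mathcal{U}$, it is the unique largest open subset of $X$ on which $\mathcal{F}$ is equicontinuous.

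For the second part of the statement, I would simply apply the first part to the family of iterates $\mathcal{F} = \{f^{n}:n\geq1\}$ of a self-map $f:X\rightarrow X$. This immediately yields a maximal open subset $U$ of $X$ on which the iterates form an equicontinuous family.

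There is no real obstacle here: the argument is a routine maximal-element construction relying only on the facts that arbitrary unions of open sets are open and that pointwise equicontinuity at a point depends only on the behavior of $\mathcal{F}$ at that point. The only thing to be careful about is interpreting ``equicontinuous on $U$'' in the pointwise sense consistent with the preceding definition, rather than demanding uniform equicontinuity, which would require a different (and more delicate) argument.
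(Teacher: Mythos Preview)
Your argument is correct: taking the union of all open sets on which $\mathcal{F}$ is pointwise equicontinuous yields the desired maximal open set, and your observation that one must read ``equicontinuous on $U$'' in the pointwise sense (consistent with the paper's Definition preceding the theorem) is exactly the right caveat. The paper itself does not give a proof but simply cites Beardon \cite{beardon:gtm}, Theorem~3.1.2; your direct construction is the standard one and is essentially what that reference contains, so there is no meaningful divergence in approach.
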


\begin{proof}
This is proven in \cite{beardon:gtm} Theorem $3.1.2.$
\end{proof}

\begin{defn}
Let $f\in\mathbb{C}(X)$ have degree $d\geq2,$ its \emph{Fatou set} $\mathrm{F}(f)$ is the maximal open subset in $\mathbb{P}^{1}(\mathbb{C})$ such that the family of iterates $\{f^{n}:n\geq1\}$ is equicontinuous. The \emph{Julia set} $\mathrm{J}(f)$ is the complement of $\mathrm{F}(f)$ in $\mathbb{P}^{1}(\mathbb{C}).$
\end{defn}

\begin{prop}
Fatou set is open in $\mathbb{P}^{1}(\mathbb{C})$ and Julia set is compact in $\mathbb{P}^{1}(\mathbb{C}).$
\end{prop}

\begin{proof}
The Fatou set $\mathrm{F}(f)$ is open in $\mathbb{P}^{1}(\mathbb{C})$ by definition. As $\mathrm{J}(f)$ is complement of an open set, it is closed in $\mathbb{P}^{1}(\mathbb{C}),$ and compactness follows from the fact that $\mathbb{P}^{1}(\mathbb{C})$ is compact.
\end{proof}

\begin{defn}
If $f$ is a polynomial in $\mathbb{C}[X],$ the \emph{filled Julia set} of $f$ is the set of all $z\in\mathbb{C}$ such that $f^{n}(z)$ is bounded as $n\rightarrow\infty,$ it is denoted by $\mathrm{FJ}(f).$
\end{defn}

The following proposition shows that the Julia set of a polynomial $f$ is in fact the boundary of the filled Julia set.

\begin{prop} \label{Julia boundary}
The Julia set $\mathrm{J}(f)$ is the boundary of the filled Julia set $\mathrm{FJ}(f).$
\end{prop}

\begin{proof}
This is proven in \cite{Filled-Julia} Proposition $2.6.$
\end{proof}

\begin{prop} \label{filled Julia compactness}
The filled Julia set $\mathrm{FJ}(f)$ is compact in $\mathbb{P}^{1}(\mathbb{C})$
\end{prop}

\begin{proof}
We have $\mathrm{FJ}(f)$ is the complement of $U_{\infty}(f)$ in $\mathbb{P}^{1}(\mathbb{C}),$ where $U_{\infty}(f)$ is the basin of $\infty$ as a superattracting fixed point in Beardon's terminology \cite{beardon:gtm} page $104.$ As explained by Beardon \cite{beardon:gtm}, it follows that $U_{\infty}(f)=\bigcup_{n\geq1}f^{-n}(B)$ where $B$ is the connected component of the Fatou set containing the point $\infty,$ Hence both $B$ and $U_{\infty}(f)$ are open subsets of $\mathbb{P}^{1}(\mathbb{C}).$ Thus $\mathrm{FJ}(f)$ is closed in $\mathbb{P}^{1}(\mathbb{C}),$ so it is compact as $\mathbb{P}^{1}(\mathbb{C})$ is compact.
\end{proof}

\begin{lem} \label{compact subset}
Let $X$ be a compact metric space and let $\mu_{n}$ be a sequence of Borel probability measures on $X$ converges weakly to a Borel probability measure $\mu$ on $X.$ Suppose $C$ is a compact subset in $X$ such that the support $\mathrm{supp}(\mu_{n})\subseteq C$ for all $n,$ then we have that $\mathrm{supp}(\mu)\subseteq C.$
\end{lem}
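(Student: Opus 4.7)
The plan is to show $\mu(X \setminus C) = 0$, which immediately yields $\mathrm{supp}(\mu) \subseteq C$ since by definition $\mathrm{supp}(\mu)$ is the complement of the largest open $\mu$-null subset of $X$. The set $U := X \setminus C$ is open because $C$ is compact (hence closed) in the Hausdorff space $X$, and the hypothesis $\mathrm{supp}(\mu_n) \subseteq C$ gives $\mu_n(U) = 0$ for every $n$.

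I would then invoke the Portmanteau theorem, specifically the characterization that weak convergence $\mu_n \to \mu$ is equivalent to $\mu(U) \leq \liminf_n \mu_n(U)$ for every open set $U \subseteq X$. Applied to our $U = X \setminus C$, this gives $\mu(X \setminus C) \leq \liminf_n 0 = 0$, so $\mu(X \setminus C) = 0$ as desired.

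If one prefers to argue only from the definition of weak convergence via continuous test functions (avoiding the Portmanteau theorem), the alternative route is a short Urysohn argument. Assume for contradiction $\mu(X \setminus C) > 0$. Since $X$ is a compact metric space, $\mu$ is inner regular, so there exists a compact $K \subseteq X \setminus C$ with $\mu(K) > 0$. By Urysohn's lemma applied to the disjoint closed sets $K$ and $C$, there is a continuous $\varphi : X \to [0,1]$ with $\varphi \equiv 1$ on $K$ and $\varphi \equiv 0$ on $C$. Then $\int \varphi \, d\mu_n = 0$ for all $n$ because each $\mu_n$ is supported on $C$, while $\int \varphi \, d\mu \geq \mu(K) > 0$, contradicting $\int \varphi \, d\mu_n \to \int \varphi \, d\mu$.

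There is no real obstacle here; this is a standard portmanteau-type fact. The only small subtlety worth writing out carefully is justifying that $C$ closed implies $X \setminus C$ open (immediate, but the compactness of $C$ is what is being used in the hypothesis so it is worth noting that closedness is all that is needed at this step) and checking that $\mu_n(U) = 0$ really follows from $\mathrm{supp}(\mu_n) \subseteq C$, which is immediate from the definition of support.
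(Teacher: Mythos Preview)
Your proposal is correct, and your Urysohn alternative is essentially the paper's own proof: the paper likewise argues by contradiction, separates a set of positive $\mu$-measure from $C$ by Urysohn's lemma, and contrasts $\int\varphi\,d\mu_n=0$ with $\int\varphi\,d\mu>0$. The only cosmetic difference is in how that set is produced---the paper picks a point $x\in\mathrm{supp}(\mu)\setminus C$ and uses regularity of the metric space to find an open neighborhood $U\ni x$ with $\bar U\cap C=\varnothing$, whereas you invoke inner regularity of $\mu$ to obtain a compact $K\subseteq X\setminus C$ with $\mu(K)>0$. Your Portmanteau route is a slicker packaging of the same content (the Urysohn construction is absorbed into the standard proof of the open-set inequality) and has the minor virtue of making explicit that only closedness of $C$, not compactness, is actually used.
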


\begin{proof}
Suppose that the support $\mathrm{supp}(\mu)\nsubseteq C,$ pick $x\in\mathrm{supp}(\mu)$ that is not in $C.$ Since $x$ is in $\mathrm{supp}(\mu),$ thus every open neighborhood $U$ of $x$ has positive $\mu$-measure. By the regular property of metric spaces, there is a neighborhood $U$ of $x$ such that the closure $\bar{U}$ is disjoint from $C.$ There is a continuous map $\varphi: X\rightarrow[0,1]$ such that $\varphi(\bar{U})=1$ and $\varphi(C)=0$ by Urysohn's Lemma. We observe that $\int_{X}\varphi d\mu_{n}=0$ follows from the assumption that each $\mu_{n}$ is supported on $C,$ it follows that $$\int_{X}\varphi d\mu\geq\int_{U}\varphi d\mu=\int_{U}1d\mu=\mu(U)>0$$ which contradicts the weak convergence.
\end{proof}

\begin{thm}[Lyubich \cite{Complex-equi-distribution}] \label{archimedean version}
Let $f\in\mathbb{C}(X)$ be a degree $d\geq2$ rational function. Assume $\alpha\in\mathbb{P}^{1}(\mathbb{C})$ is not an exceptional point of $f.$ Let $\mu_{n}$ be the probability measure on $\mathbb{P}^{1}(\mathbb{C})$ supported equally on each of the $d^{n}$ points in $f^{-n}(\alpha).$ That is, we define $$\mu_{n}=\frac{1}{d^{n}}\sum_{z\in f^{-n}(\alpha)}\delta_{z},$$ where each point in the sum is counted with multiplicity and $\delta_{z}$ is Dirac measure on $\mathbb{P}^{1}(\mathbb{C})$ giving mass $1$ to the point $z.$ Then the sequence $\mu_{n}$ converges weakly to the \emph{canonical measure} $\mu_{f,\infty}$ supported on the Julia set of $f.$
\end{thm}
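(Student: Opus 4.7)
The plan is the classical argument of Brolin--Lyubich, combining weak-$*$ compactness of probability measures with a potential-theoretic identification of the limit. Since $\mathbb{P}^{1}(\mathbb{C})$ is compact, Banach--Alaoglu yields weak-$*$ sequential compactness of the space of Borel probability measures on it; hence $\{\mu_{n}\}$ has convergent subsequences, and it suffices to show every weak subsequential limit equals the same measure $\mu_{f,\infty}.$ The key algebraic relation is the recursion $\mu_{n+1}=\frac{1}{d}f^{*}\mu_{n},$ coming from $f^{-(n+1)}(\alpha)=f^{-1}(f^{-n}(\alpha))$ and each fiber having exactly $d$ preimages counted with multiplicity. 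By continuity of $f^{*}$ on measures (for the finite map $f$), any two consecutive sublimits $\mu,\mu'$ are related by $\mu'=\frac{1}{d}f^{*}\mu,$ so uniqueness of the sublimit will follow once we have identified a single candidate.

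To identify the common limit I would use potential theory. Locally on $\mathbb{C},$ one has $\mu_{n}=\frac{1}{2\pi}\Delta(d^{-n}u_{n})$ in the distributional sense with $u_{n}(z)=\log|f^{n}(z)-\alpha|.$ A telescoping argument in the spirit of Tate's proof of Proposition \ref{canonical height}, cleanest after lifting $f$ to a homogeneous degree-$d$ map $F:\mathbb{C}^{2}\to\mathbb{C}^{2}$ and controlling the differences $d^{-(n+1)}\log\|F^{n+1}\|-d^{-n}\log\|F^{n}\|$ via the two-sided estimate $\|F(v)\|=\|v\|^{d}(1+O(\|v\|^{-1}))$ at infinity, shows that $d^{-n}u_{n}$ converges in $L^{1}_{\mathrm{loc}}$ to a continuous plurisubharmonic Green's function $G$ depending only on $f$ (not on $\alpha$); non-exceptionality of $\alpha$ is used to rule out the degenerate limit $-\infty.$ Taking Laplacians, every weak subsequential limit equals $\mu_{f,\infty}:=\frac{1}{2\pi}\Delta G,$ which forces weak convergence of the full sequence.

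Finally, to verify that $\mu_{f,\infty}$ is supported on $\mathrm{J}(f),$ I would use normality of the iterates on the Fatou set: for any compact $K\subseteq\mathrm{F}(f),$ Montel's theorem combined with the non-exceptionality of $\alpha$ forces $\#(f^{-n}(\alpha)\cap K)=o(d^{n}),$ so $\mu_{n}(K)\to 0$ and the weak limit puts no mass on the open Fatou set, so its support lies in the complementary closed Julia set. The main obstacle is the $L^{1}_{\mathrm{loc}}$-convergence $d^{-n}u_{n}\to G,$ which is the heart of the Brolin--Lyubich argument; the lift-to-$\mathbb{C}^{2}$ approach is what makes this tractable, as it replaces the many local logarithmic singularities of the $u_{n}$ on $\mathbb{P}^{1}(\mathbb{C})$ with the single well-behaved global potential $d^{-n}\log\|F^{n}\|,$ which is Cauchy uniformly on compact subsets of $\mathbb{C}^{2}\setminus\{0\}.$
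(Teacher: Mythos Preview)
The paper does not actually prove this theorem: its entire proof is the single sentence ``This is proven in Lyubich \cite{Complex-equi-distribution} Theorem~$1$.'' So there is no argument to compare against; the result is quoted as a black box.

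Your sketch, by contrast, outlines a genuine proof along the Brolin--Lyubich--Freire--Lopes--Ma\~n\'e lines, and the overall strategy is sound: weak-$*$ compactness reduces the problem to identifying all subsequential limits, the pullback recursion $\mu_{n+1}=\tfrac{1}{d}f^{*}\mu_{n}$ is correct, and the homogeneous lift $F:\mathbb{C}^{2}\to\mathbb{C}^{2}$ with the telescoping estimate on $d^{-n}\log\|F^{n}\|$ is exactly the clean way to build the Green current and show independence of $\alpha$. Two small points to tighten if you ever write this out in full: the two-sided bound you want is $C_{1}\|v\|^{d}\le\|F(v)\|\le C_{2}\|v\|^{d}$ on $\mathbb{C}^{2}\setminus\{0\}$ (by homogeneity and compactness of the sphere), not the asymptotic ``$1+O(\|v\|^{-1})$'' you wrote; and the claim $\#(f^{-n}(\alpha)\cap K)=o(d^{n})$ for $K$ compact in the Fatou set needs a little more than Montel alone---one typically argues via the classification of Fatou components or via Lyubich's entropy argument that preimages cannot accumulate with positive density on a normal-family domain. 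But these are refinements, not gaps; for the purposes of this paper your level of detail already exceeds what the paper itself supplies.
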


\begin{proof}
This is proven in Lyubich \cite{Complex-equi-distribution} Theorem $1.$
\end{proof}

The main idea behind the proof of Theorem \ref{nonperiodic archimedean} is in the following simple Lemma, as well as an  application of Theorem \ref{non-archimedean equidistribution} at an archimedean place.

\begin{lem} \label{all root in it}
Let $K$ be a number field. Assume that $F(X)$ is irreducible in $K[X],$ and let $L/K$ be the splitting field of $F.$ Suppose that $\Gal(L/K)$ is abelian, then for any place $v$ of $K,$ either $F$ splits completely over $K_{v},$ or it has no roots in $K_{v}.$
\end{lem}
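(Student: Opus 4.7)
The plan is to exploit the single but powerful consequence of abelianness: every intermediate field of $L/K$ is itself Galois over $K$. Fix a root $\alpha$ of $F$ in $L$, so that $K(\alpha) \subseteq L$ has degree $n := \deg F$ over $K$; since $\Gal(L/K)$ is abelian, the subgroup $\Gal(L/K(\alpha))$ is automatically normal, and hence $K(\alpha)/K$ is a Galois extension of degree $n$.

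Next I would convert the two conclusions of the lemma into statements about prime decomposition in $K(\alpha)$. Let $\mathfrak{P}_1,\dots,\mathfrak{P}_g$ be the primes of $K(\alpha)$ lying above $v$, with ramification indices $e_i$ and residue degrees $f_i$. The standard correspondence between factorizations $F = \prod_i F_i$ in $K_v[X]$ and primes of $K(\alpha)$ above $v$ gives $\deg F_i = e_i f_i$ and $\sum_{i=1}^g e_i f_i = n$. In this language, $F$ has a root in $K_v$ exactly when some $e_i f_i = 1$, and $F$ splits completely in $K_v$ exactly when every $e_i f_i = 1$.

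The last step is to feed in the Galois hypothesis on $K(\alpha)/K$ established in the first paragraph: in a Galois extension the primes above $v$ are all conjugate, so they share a single common ramification index $e$ and residue degree $f$, with $efg = n$. Consequently, if any one of the quantities $e_i f_i$ equals $1$, then all of them do, so $g = n$ and $F$ factors into $n$ linear factors over $K_v$. If instead every $e_i f_i > 1$, then $F$ has no root at all in $K_v$. This is exactly the dichotomy asserted by the lemma.

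There is no genuine obstacle in the argument; the only bookkeeping to get right is the translation between the factorization of $F$ in $K_v[X]$ and the decomposition of $v$ in $K(\alpha)$. The real content is the one-line observation that, since $\Gal(L/K)$ is abelian, every subgroup is normal and hence every subfield of $L$ is Galois over $K$, which is precisely what rules out the ``partial splitting'' phenomenon.
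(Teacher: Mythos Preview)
Your argument is correct and rests on the same key observation as the paper: since $\Gal(L/K)$ is abelian, the intermediate field $K(\alpha)$ is already Galois over $K$. The paper, however, finishes in one line rather than invoking the $e,f,g$ formalism: if $F$ has a root $\alpha$ in $K_v$, then $K(\alpha)\subseteq K_v$; being Galois over $K$ and containing one root of the irreducible polynomial $F$, the field $K(\alpha)$ contains all of them, hence so does $K_v$. Your route through prime decomposition is perfectly valid and perhaps more conceptual from the algebraic-number-theory viewpoint, but it imports machinery (the transitivity of Galois action on primes above $v$, the factorization--prime correspondence) that is not strictly needed here; the paper's containment argument is shorter and more self-contained.
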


\begin{proof}
If $\alpha$ is a root in $K_{v},$ the abelian assumption implies $K(\alpha)$ is Galois over $K.$ Since $F(X)$ is irreducible in $K[X],$ it follows that $F$ splits completely over $K(\alpha)$ and hence over $K_{v},$ because $K(\alpha)$ is a subfield of $K_{v}.$
\end{proof}

\begin{thm}\label{nonperiodic archimedean}
Let $K$ be a number field, and $v$ a real archimedean place of $K$ corresponding to a map $\sigma:K\hookrightarrow\mathbb{R}.$ Let $f\in K(X)$ be a rational map of degree $\geq2,$ and let $\alpha\in\mathbb{P}^{1}(K)$ be a nonperiodic point, and assume the following.
\begin{enumerate}
\item The rational map $f_{\sigma}:\mathbb{P}^{1}(\mathbb{R})\rightarrow\mathbb{P}^{1}(\mathbb{R})$ is surjective, where $f_{\sigma}$ is the polynomial in $\mathbb{R}[X]$ defined by applying $\sigma$ to the coefficients of $f.$
\item The Julia set of $f_{\sigma}$ is not contained in $\mathbb{P}^{1}(\mathbb{R}).$
\end{enumerate}
Then $\Gal(K_{\infty}/K)$ is not abelian.
\end{thm}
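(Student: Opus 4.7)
The plan is to proceed by contradiction. Suppose $\Gal(K_{\infty}/K)$ is abelian. Fix an extension $\bar\sigma:\bar K \hookrightarrow \mathbb{C}$ of $\sigma$. Using hypothesis (1), I would inductively construct $\beta_n \in f^{-n}(\alpha)$ with $\bar\sigma(\beta_n) \in \mathbb{R}$ for all $n \geq 1$: starting from $\beta_0 = \alpha$, given $\beta_{n-1}$ with $\bar\sigma(\beta_{n-1}) \in \mathbb{R}$, surjectivity of $f_\sigma$ on $\mathbb{P}^1(\mathbb{R})$ yields some $t \in \mathbb{R}$ with $f_\sigma(t) = \bar\sigma(\beta_{n-1})$, and since the $\bar\sigma$-images of the roots of $f(X) - \beta_{n-1}$ in $\bar K$ are exactly the roots of $f_\sigma(X) - \bar\sigma(\beta_{n-1})$ in $\mathbb{C}$, some root $\beta_n \in \bar K$ of $f(X) - \beta_{n-1}$ satisfies $\bar\sigma(\beta_n) = t$. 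The $\beta_n$ are pairwise distinct: $\beta_n = \beta_m$ with $n < m$ would give $\alpha = f^m(\beta_m) = f^{m-n}(f^n(\beta_m)) = f^{m-n}(\alpha)$, contradicting the nonperiodicity of $\alpha$.

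The key step is to promote ``one real conjugate'' to ``all real conjugates''. For each $n$, the minimal polynomial $F_n \in K[X]$ of $\beta_n$ is irreducible and its splitting field lies inside $K_\infty$, hence is abelian over $K$. Since $\bar\sigma(\beta_n) \in \mathbb{R} = K_v$, Lemma \ref{all root in it} applied to $F_n$ at $v$ forces every root of $F_n$ to lie in $\mathbb{R}$, equivalently every embedding $K(\beta_n) \hookrightarrow \mathbb{C}$ extending $\sigma$ sends $\beta_n$ into $\mathbb{R}$. Consequently the probability measure $[\beta_n]_v$ is supported on the compact subset $\mathbb{P}^1(\mathbb{R}) \subseteq \mathbb{P}^1_{\mathrm{Berk},v} = \mathbb{P}^1(\mathbb{C})$.

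Finally, by Proposition \ref{canonical height} the canonical heights satisfy $\hat h_f(\beta_n) = \hat h_f(\alpha)/d^n \to 0$, so Theorem \ref{non-archimedean equidistribution} applied to the distinct sequence $\{\beta_n\}$ gives weak convergence $[\beta_n]_v \to \mu_{f,v}$, and Lemma \ref{compact subset} then forces $\mathrm{supp}(\mu_{f,v}) \subseteq \mathbb{P}^1(\mathbb{R})$. At the archimedean place $v$, however, $\mu_{f,v}$ coincides with the Lyubich canonical measure $\mu_{f_\sigma,\infty}$ of Theorem \ref{archimedean version}, whose support is precisely the Julia set $\mathrm{J}(f_\sigma)$ (a standard fact going beyond the ``supported on'' statement quoted, which follows from the $f_\sigma$-invariance and mixing properties of $\mu_{f_\sigma,\infty}$). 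We conclude $\mathrm{J}(f_\sigma) \subseteq \mathbb{P}^1(\mathbb{R})$, contradicting hypothesis (2). The main subtlety is the bootstrap step via Lemma \ref{all root in it}, which is precisely where the abelian hypothesis is used; the construction of the $\beta_n$ is otherwise a routine application of hypothesis (1), and the equidistribution input is standard.
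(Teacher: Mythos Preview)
Your proof is correct and follows essentially the same approach as the paper: construct a backward orbit with real embeddings via the surjectivity hypothesis, invoke Lemma~\ref{all root in it} under the abelian assumption to make every conjugate real, and then apply the equidistribution theorem together with Lemma~\ref{compact subset} to force $\mathrm{supp}(\mu_{f,v})\subseteq\mathbb{P}^{1}(\mathbb{R})$, contradicting hypothesis~(2). Your write-up is in fact more scrupulous than the paper's in distinguishing algebraic elements from their real images, in verifying $\hat h_f(\beta_n)\to 0$ explicitly, and in flagging that one needs $\mathrm{supp}(\mu_{f,v})=\mathrm{J}(f_\sigma)$ rather than mere containment.
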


\begin{proof}
If $\alpha$ is not a periodic point of $f,$ by the surjectivity of $f:\mathbb{P}^{1}(\mathbb{R})\rightarrow\mathbb{P}^{1}(\mathbb{R}),$ we may define $\alpha_{0}=\alpha,\alpha_{1},\alpha_{2},\dots$ in $\mathbb{P}^{1}(\mathbb{R})$ such that $f(\alpha_{n+1})=\alpha_{n}$ for all $n\geq1.$ These $\alpha_{n}$ are distinct since $\alpha$ is not periodic, otherwise $\alpha_{m}=\alpha_{n}$ implies that $f^{m-n}(\alpha_{m})=\alpha_{n}$ for any $m\geq n.$

Suppose that $\Gal(K_{\infty}/K)$ is abelian, these $\alpha_{n}$ are totally real by Lemma \ref{all root in it}. Thus the measures $[\alpha_{n}]_{v}$ on $\mathbb{P}^{1}(\mathbb{R})$ converges weakly to the canonical measure $\mu_{f,v}$ such that the support $\mathrm{supp}(\mu_{f,v})=\mathrm{J}(f)$ by Theorem \ref{non-archimedean equidistribution}. This contradicts the assumption that $\mathrm{J}(f)$ is not contained in $\mathbb{P}^{1}(\mathbb{R})$ by Lemma \ref{compact subset}.
\end{proof}

Note that the hypotheses in Theorem \ref{nonperiodic archimedean} is automatically satisfied if $K$ is a number field with a real place, $f$ is a polynomial in $K[X]$ with odd degree and nonreal Julia set, and $\alpha$ is not $f$-periodic.

\section{Characterize polynomials with real Julia sets}
\begin{thm} \label{Julia equivalence}
Assume $f\in\mathbb{R}[X]$ is a degree $d\geq2$ polynomial with real coefficients. The following are equivalent.
\begin{enumerate}
\item We have that $\mathrm{J}(f)\subseteq\mathbb{R},$\ in other words the Julia set is real.
\item We have that $\mathrm{FJ}(f)\subseteq\mathbb{R},$\ in other words the filled Julia set is real.
\item The set of complex preperiodic points $\PrePer(f,\mathbb{C})\subseteq\mathbb{R}.$
\item The set of real preperiodic points $\PrePer(f,\mathbb{R})$ is nonempty and is contained in the critical interval $\mathrm{I}(f).$
\end{enumerate}
\end{thm}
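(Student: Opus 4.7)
\emph{Proof plan.} The plan is to close the cycle $(1) \Rightarrow (2) \Rightarrow (3) \Rightarrow (4) \Rightarrow (1)$. The first three implications combine standard complex-dynamical facts with the basic observation that preimages of preperiodic points are preperiodic; the final implication is the heart of the argument and uses the archimedean equidistribution theorem (Theorem \ref{archimedean version}) together with the observation that the critical-interval condition propagates under taking preimages.

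For $(1) \Rightarrow (2)$: because $f$ has real coefficients, the basin of infinity $U_{\infty}(f)$ is a Fatou component invariant under complex conjugation, and it contains $\{z \in \mathbb{C} : |z| > R\}$ for some $R > 0$ since $\infty$ is superattracting. Under $(1)$, the open half-planes $\{z \in \mathbb{C} : \mathrm{Im}(z) > 0\}$ and $\{z \in \mathbb{C} : \mathrm{Im}(z) < 0\}$ are connected subsets of the Fatou set (disjoint from $\mathrm{J}(f) \subseteq \mathbb{R}$), and each meets $\{|z| > R\} \subseteq U_{\infty}(f)$; connectedness of Fatou components forces both half-planes into $U_{\infty}(f)$, so $\mathrm{FJ}(f) = \mathbb{P}^{1}(\mathbb{C}) \setminus U_{\infty}(f) \subseteq \mathbb{R}$. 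For $(2) \Rightarrow (3)$: any $f$-preperiodic point has bounded forward orbit and hence lies in $\mathrm{FJ}(f) \subseteq \mathbb{R}$. For $(3) \Rightarrow (4)$: $\PrePer(f, \mathbb{C})$ is infinite for polynomials of degree $\geq 2$ and placed in $\mathbb{R}$ by $(3)$, giving non-emptiness of $\PrePer(f, \mathbb{R})$; and for any $\alpha$ in this set, the degree-$d$ polynomial $f(X) - \alpha$ has all $d$ of its roots in $\PrePer(f, \mathbb{C}) \subseteq \mathbb{R}$, placing $\alpha$ in $\mathrm{I}(f)$.

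The main step $(4) \Rightarrow (1)$ proceeds as follows. Pick $\alpha \in \PrePer(f, \mathbb{R}) \subseteq \mathrm{I}(f)$. Since $\alpha \in \mathrm{I}(f)$, the set $f^{-1}(\alpha)$ consists of $d$ real numbers counted with multiplicity; each is a preimage of a preperiodic point and hence preperiodic, so $f^{-1}(\alpha) \subseteq \PrePer(f, \mathbb{R}) \subseteq \mathrm{I}(f)$. By induction, $f^{-n}(\alpha) \subseteq \mathbb{R}$ for every $n \geq 0$. Applying Theorem \ref{archimedean version}, the uniform measures on $f^{-n}(\alpha)$ converge weakly to the canonical measure $\mu_{f, \infty}$ supported on $\mathrm{J}(f)$; these measures are all supported on the compact subset $\mathbb{P}^{1}(\mathbb{R}) \subseteq \mathbb{P}^{1}(\mathbb{C})$, so Lemma \ref{compact subset} yields $\mathrm{J}(f) \subseteq \mathbb{P}^{1}(\mathbb{R}) \cap \mathbb{C} = \mathbb{R}$. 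The main obstacle is that Theorem \ref{archimedean version} requires $\alpha$ to be non-exceptional: for a polynomial, a finite exceptional point $c$ forces $f(X) = a(X - c)^{d} + c$ for some nonzero $a \in \mathbb{R}$, and a short case analysis on the parity of $d$ and the sign of $a$ shows that $\mathrm{I}(f)$ is essentially $\{c\}$ when $d \geq 3$ (a closed half-line when $d = 2$), while $\PrePer(f, \mathbb{R})$ always contains a real fixed point, period-$2$ point, or a preimage thereof lying outside $\mathrm{I}(f)$, so such polynomials violate $(4)$. Hence hypothesis $(4)$ rules out finite exceptional points and $\alpha$ is automatically non-exceptional, making the equidistribution argument go through.
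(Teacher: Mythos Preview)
Your proof is correct and follows the same cycle $(1)\Rightarrow(2)\Rightarrow(3)\Rightarrow(4)\Rightarrow(1)$ as the paper, with the crucial step $(4)\Rightarrow(1)$ handled identically via Lyubich equidistribution (Theorem~\ref{archimedean version}) and Lemma~\ref{compact subset}. The only variations are cosmetic: your $(1)\Rightarrow(2)$ uses connectedness of the open half-planes inside the Fatou set rather than the paper's maximal-imaginary-part argument, and for the non-exceptionality issue in $(4)\Rightarrow(1)$ you do a direct case analysis on $a(X-c)^d+c$, whereas the paper more briefly conjugates to $\pm X^d$ and exhibits $\pm 1$ as additional real preperiodic points.
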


\begin{proof}
First we prove the implication $(1)\Rightarrow(2).$ If $\mathrm{FJ}(f)$ is not contained in $\mathbb{R},$ then it contains a point with nonzero imaginary part. Without loss of generality, we can assume the point has positive imaginary part, as the proof for a point of negative imaginary part is similar. Because $\mathrm{FJ}(f)$ is compact by Proposition \ref{filled Julia compactness}, it contains a point $\zeta$ having maximal positive imaginary part. This $\zeta$ would be a nonreal boundary in $\mathrm{FJ}(f),$ which contradicts to the fact that $\mathrm{J}(f)\subseteq\mathbb{R}$ is the boundary of $\mathrm{FJ}(f).$ Thus $\mathrm{FJ}(f)$ is real, so it has empty interior. In particular $\mathrm{FJ}(f)=\mathrm{J}(f)$ because any closed set with empty interior is its own boundary.

Note that implication $(2)\Rightarrow(3)$ follows immediately from $\PrePer(f,\mathbb{C})\subseteq\mathrm{FJ}(f)$ because preperiodic points must have bounded forward orbit.

Next we prove $(3)\Rightarrow(4).$ If $(4)$ is false, either $\PrePer(f,\mathbb{R})$ is empty, or it is not contained in $\mathrm{I}(f).$ If $\PrePer(f,\mathbb{R})$ is empty, then $\mathrm{Fix}(f,\mathbb{C})$ has nonreal solutions, this will imply that $\PrePer(f,\mathbb{C})\nsubseteq\mathbb{R}$ as fixed points of $f$ are certainly preperiodic. Suppose that $\alpha\in\PrePer(f,\mathbb{R})\setminus\mathrm{I}(f),$ there exists a preperiodic point $\beta\in f^{-1}(\alpha)\setminus\mathbb{R}$ because $f(\beta)=\alpha$ implies $\beta$ has bounded forward orbit, so it is in $\PrePer(f,\mathbb{C})\setminus\mathbb{R}.$ In both cases we have shown that $\PrePer(f,\mathbb{C})\nsubseteq\mathbb{R}$ as required.

Lastly we show that $(4)\Rightarrow(1).$ We first prove that if $\PrePer(f,\mathbb{R})$ is nonempty, it contains at least one non-exceptional point $\alpha.$ If the only point in $\PrePer(f,\mathbb{R})$ is actually an exceptional point, then there exists a real affine automorphism $\varphi=hX+k\in\mathbb{R}[X]$ such that $\varphi\circ f\circ\varphi^{-1}=\pm X^{d}$ with $\varphi(\alpha)=0.$ Now $\pm X^{d}$ has other preperiodic points such as $\pm1.$ Thus $\varphi(0)$ and $\varphi(\pm1)$ are preperiodic, this contradicts the assumption $\PrePer(f,\mathbb{R})$ contains only one point which is in fact exceptional. Suppose that $\alpha\in\PrePer(f,\mathbb{R})\subseteq\mathrm{I}(f),$ we have $f^{-1}(\alpha)\subseteq\PrePer(f,\mathbb{C})$ for all $n\geq1,$ as an inverse image of $\alpha$ is also preperiodic. Note $f^{-1}(\alpha)\subseteq\mathbb{R}$ as $\alpha$ is an element in $I(f)$ using the definition of $I(f).$ It follows that $$f^{-1}(\alpha)\subseteq\PrePer(f,\mathbb{R})\Rightarrow f^{-1}(\alpha)\subseteq\mathrm{I}(f).$$ Iterating this argument shows that $$f^{-n}(\alpha)\subseteq\PrePer(f,\mathbb{R})\ \text{for all}\ n\geq1.$$ As $\alpha$ is not an exceptional point of $f$ and $f^{-n}(\alpha)\subseteq\PrePer(f,\mathbb{R}),$ the backward orbit $f^{-\infty}(\alpha)$ equidistributes on $\mathbb{R}$ by Theorem \ref{archimedean version}, so $\mu_{n}$ converges weakly to the canonical measure $\mu_{f,\infty}$ supported on $\mathrm{J}(f),$ and thus $\mathrm{J}(f)\subseteq\mathbb{R}$ by Lemma \ref{compact subset}.
\end{proof}

The following result shows that for any polynomial with positive lead coefficient, if we start at an initial point which is greater than largest fixed point of polynomial, it follows that forward iterates are unbounded. 

\begin{lem}\label{geometric argument}
If $f\in\mathbb{R}[X]$ is a degree $d\geq2$ polynomial with positive lead coefficient and $\alpha_{0}$ is its largest real fixed point, then $\lim_{n\rightarrow\infty}f^{n}(x)=+\infty$ for all $x>\alpha_{0}.$
\end{lem}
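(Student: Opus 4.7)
The plan is to reduce the statement to a monotone-convergence argument, using the auxiliary polynomial $g(X) = f(X) - X$ to pin down the sign of $f(x) - x$ on the interval $(\alpha_0, \infty)$.

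First I would observe that $g(X) = f(X) - X$ has the same lead coefficient as $f$, hence is positive, so $g(x) \to +\infty$ as $x \to +\infty$. The real roots of $g$ are precisely the real fixed points of $f$, so by hypothesis $\alpha_0$ is the largest real root of $g$. Since $g$ is continuous, has no zero on $(\alpha_0, \infty)$, and tends to $+\infty$ at $+\infty$, the intermediate value theorem forces $g(x) > 0$ for all $x > \alpha_0$. Equivalently, $f(x) > x$ for every $x > \alpha_0$.

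Next I would fix $x > \alpha_0$ and consider the forward orbit $\{f^n(x)\}_{n \geq 0}$. An easy induction using the previous step shows that $f^n(x) > \alpha_0$ for all $n$, and that the sequence is strictly increasing: indeed $f^{n+1}(x) = f(f^n(x)) > f^n(x)$ because $f^n(x) > \alpha_0$.

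Finally I would rule out the possibility that the orbit is bounded. If $f^n(x)$ were bounded above, being monotone it would converge to some real limit $L \geq x > \alpha_0$. By continuity of $f$, passing to the limit in $f^{n+1}(x) = f(f^n(x))$ gives $L = f(L)$, so $L$ is a real fixed point of $f$ strictly larger than $\alpha_0$, contradicting the maximality of $\alpha_0$. Hence $f^n(x) \to +\infty$, as claimed. There is no serious obstacle here; the only care is in verifying the strict inequality $f(x) > x$ on the whole ray $(\alpha_0, \infty)$ via the sign analysis of $g$, which is what makes the monotone sequence strictly increasing and forces the limit argument to produce a fixed point.
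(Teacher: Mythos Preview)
Your proof is correct and follows essentially the same route as the paper: both show $f(x)>x$ on $(\alpha_{0},\infty)$ via the sign of $g(X)=f(X)-X$, deduce that the orbit is strictly increasing and stays above $\alpha_{0}$, and then rule out boundedness. The only difference is in the last step: the paper extracts a uniform lower bound $c=\min_{x\geq\alpha_{0}+\delta}(f(x)-x)>0$ and telescopes to obtain $f^{n}(x)\geq x+nc$, whereas you argue that a bounded monotone orbit would converge by continuity to a fixed point strictly larger than $\alpha_{0}$, a contradiction.
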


\begin{proof}
We claim that if $\delta>0$ is arbitrary, then $f(x)\geq \alpha_{0}+\delta$ whenever $x\geq \alpha_{0}+\delta.$ Now $\alpha_{0}=\max\{x\in\mathbb{R}:f(x)=x\}$ is the largest fixed point from the definition of $\alpha_{0}.$ Thus $f(x)-x>0$ for all $x\geq \alpha_{0}+\delta.$ In particular $f$ takes $[\alpha_{0}+\delta,\infty)$ to itself.

If $x>\alpha_{0}$ is arbitrary and $\delta$ is small enough such that $\alpha_{0}+\delta<x,$ it follows from the intermediate value theorem that $$c:=\inf_{x>\alpha_{0}+\delta}(f(x)-x)>0.$$ To prove this we let $g(x)=f(x)-x,$ then $g(x)>0$ for all $x\geq\alpha_{0}+\delta$ and $\lim_{x\rightarrow+\infty}g(x)=+\infty.$ Since $g$ is continuous, it achieves its minimum $c$ on $[\alpha_{0}+\delta,+\infty)$ and in particular $c>0.$ From the previous paragraph we know that $f(x)>x$ and $f^{2}(x)>f(x)$ from the fact that $f(x)>\alpha_{0},$ so by induction it follows that $f^{n}(x)>f^{n-1}(x)$ for all $n\geq1.$ The result follows because $$f^{n}(x)-x=(f^{n}(x)-f^{n-1}(x))+\cdots+(f(x)-x)\geq nc.$$ Each term in parentheses is greater than $c$ and thus $\lim_{n\rightarrow\infty}f^{n}(x)=\infty.$
\end{proof}

\begin{prop} \label{odd positive}
Let $f\in\mathbb{R}[X]$ have odd degree $d\geq3$ with positive lead coefficient. Then $\mathrm{J}(f)\subseteq\mathbb{R}$ if and only if $\mathrm{Fix}(f,\mathbb{R})\subseteq\mathrm{I}(f).$
\end{prop}

\begin{proof}
Suppose $\mathrm{J}(f)\subseteq\mathbb{R}$ then $\mathrm{Fix}(f,\mathbb{R})\subseteq\PrePer(f,\mathbb{R})\subseteq\mathrm{I}(f)$ by Theorem \ref{Julia equivalence}.

Suppose now that $\mathrm{Fix}(f,\mathbb{R})\subseteq\mathrm{I}(f),$ we prove that $\PrePer(f,\mathbb{R})$ is nonempty and is contained in the critical interval $\mathrm{I}(f).$ This implies $\mathrm{J}(f)\subseteq\mathbb{R}$ by Theorem \ref{Julia equivalence}. It is clear $\PrePer(f,\mathbb{R})\ne\varnothing,$ as the odd degree polynomial $f(x)-x$ has at least one real root. Also $\PrePer(f,\mathbb{R})$ contains at least one non-exceptional point as proven in Theorem \ref{Julia equivalence}. Suppose that $a_{1}$ and $a_{2}$ are the smallest and the largest real fixed points of $f$ respectively. The claim $\lim_{n\rightarrow\infty}f^{n}(x)=+\infty$ for all $x>a_{2}$ follows immediately from Lemma \ref{geometric argument}. To prove $\lim_{n\rightarrow\infty}f^{n}(x)=-\infty$ for all $x<a_{1},$ apply Lemma \ref{geometric argument} to $g(x)=-f(-x)$ so that $-a_{1}$ is the largest real fixed point of $g.$ This implies that $\lim_{n\rightarrow\infty}g^{n}(x)=+\infty$ for all $x>-a_{1}.$ It follows that $\PrePer(f,\mathbb{R})\subseteq[a_{1},a_{2}]\subseteq\mathrm{I}(f)$ and $\mathrm{J}(f)\subseteq\mathbb{R}$ by Theorem \ref{Julia equivalence}.
\end{proof}

The following result shows that a polynomial and any one of its iterates have the same Julia set.

\begin{lem} \label{critical interval iterates}
Let $f\in\mathbb{C}[X]$ have degree $d\geq2,$ then $\mathrm{I}(f^{n})\subseteq\mathrm{I}(f)$ for all $n\geq1.$
\end{lem}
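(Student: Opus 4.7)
The plan is to exploit the factorization
\begin{equation*}
f^{n}(X) - t \;=\; c \prod_{i} \bigl(f^{n-1}(X) - y_{i}\bigr)^{m_{i}},
\end{equation*}
where $c$ is the leading coefficient of $f$ and $y_{1}, \dots, y_{k}$ are the distinct complex roots of $f(Y) - t$ with multiplicities $m_{1}, \dots, m_{k}$ summing to $d$. (The critical interval is only defined for real polynomials, so I read the lemma as asserting this under $f \in \mathbb{R}[X]$.) The key observation is that a non-real root $y_{i}$ of $f(Y) - t$ forces the entire factor $f^{n-1}(X) - y_{i}$ to have no real $X$-roots, which will clash with $t \in \mathrm{I}(f^{n})$.

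First I would fix $t \in \mathrm{I}(f^{n})$, so that $f^{n}(X) - t$ has all $d^{n}$ roots real, counted with multiplicity. Then I would suppose for contradiction that some $y_{i}$ is non-real. Because $f^{n-1} \in \mathbb{R}[X]$, for any real $X$ the value $f^{n-1}(X)$ is real and hence cannot equal $y_{i}$, so $f^{n-1}(X) - y_{i} \in \mathbb{C}[X]$, a polynomial of degree $d^{n-1}$ in $X$, has no real root. The factor $(f^{n-1}(X) - y_{i})^{m_{i}}$ therefore contributes $m_{i} d^{n-1} \geq 1$ non-real roots to $f^{n}(X) - t$ (counted with multiplicity), contradicting $t \in \mathrm{I}(f^{n})$. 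So every $y_{i}$ must be real, and the factorization $f(Y) - t = c\prod_{i}(Y - y_{i})^{m_{i}}$ then exhibits $d = \sum_{i} m_{i}$ real roots of $f(Y) - t$ counted with multiplicity, giving $t \in \mathrm{I}(f)$.

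I do not anticipate a substantive obstacle; the only point requiring minor care is the multiplicity bookkeeping, and this is automatic from the factorization since the degrees match via $\sum_{i} m_{i} d^{n-1} = d \cdot d^{n-1} = d^{n}$. The same argument in fact proves the more general statement $\mathrm{I}(g \circ h) \subseteq \mathrm{I}(g)$ for any two polynomials $g, h \in \mathbb{R}[X]$ of degree at least two, from which the lemma follows by induction on $n$ with $g = f$ and $h = f^{n-1}$.
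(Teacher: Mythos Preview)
Your proof is correct and is essentially the same argument as the paper's. Both observe that if $t\in\mathrm{I}(f^{n})$ then every root $\beta$ of $f(Y)=t$ must be real, because $\beta=f^{n-1}(\gamma)$ for some (necessarily real) preimage $\gamma$ of $t$ under $f^{n}$; you phrase this via the factorization $f^{n}(X)-t=c\prod_{i}(f^{n-1}(X)-y_{i})^{m_{i}}$ and the paper phrases it via surjectivity of $f^{n-1}$, but the content is identical. Your added remarks on multiplicity bookkeeping and the generalization $\mathrm{I}(g\circ h)\subseteq\mathrm{I}(g)$ are correct but not needed beyond what the paper uses.
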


\begin{proof}
That $\alpha\in\mathrm{I}(f^{n})$ if and only if $f^{n}-\alpha$ has $d^{n}$ real solutions counting multiplicity. This requires that $\beta\in\mathbb{R}$ for all $\beta\in f^{-1}(\alpha)$ because $\beta$ must be the image under $f^{n-1}\in\mathbb{R}[X]$ of one of those $d^{n}$ inverse images of $\alpha$ under $f^{n}.$ It follows that $f-\alpha$ must have $d$ real solutions and thus $\alpha$ is in the critical interval $\mathrm{I}(f).$
\end{proof}

The following result shows the relationship of the Julia set of a polynomial is invariant with that of an iterate.

\begin{lem} \label{Julia set iterates}
Let $f\in\mathbb{R}[X]$ have degree $d\geq2,$ then $\mathrm{J}(f^{n})=\mathrm{J}(f)$ for all $n\geq1.$
\end{lem}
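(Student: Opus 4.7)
The plan is to prove the equivalent statement $\mathrm{F}(f^n) = \mathrm{F}(f)$, working directly from the equicontinuity characterization of the Fatou set on $\mathbb{P}^1(\mathbb{C})$ with the chordal metric $d$.

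First I would dispatch the inclusion $\mathrm{F}(f) \subseteq \mathrm{F}(f^n)$, which is immediate: if the full family $\{f^m : m \geq 1\}$ is equicontinuous at a point $z$, then its subfamily $\{(f^n)^k : k \geq 1\} = \{f^{nk} : k \geq 1\}$ inherits equicontinuity at $z$, so $z \in \mathrm{F}(f^n)$.

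For the reverse inclusion $\mathrm{F}(f^n) \subseteq \mathrm{F}(f)$, fix $z \in \mathrm{F}(f^n)$ and $\varepsilon > 0$. The key observation is that each of the finitely many auxiliary maps $f^0, f, f^2, \ldots, f^{n-1}$ is continuous on the compact space $\mathbb{P}^1(\mathbb{C})$, hence uniformly continuous; taking the minimum of their moduli of continuity produces a single $\eta > 0$ such that $d(x,y) < \eta$ implies $d(f^j(x), f^j(y)) < \varepsilon$ for every $j \in \{0, 1, \ldots, n-1\}$. Equicontinuity of $\{f^{nk}\}$ at $z$ then yields $\delta > 0$ such that $d(z,w) < \delta$ forces $d(f^{nk}(z), f^{nk}(w)) < \eta$ for all $k \geq 0$. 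For an arbitrary $m \geq 1$, writing $m = nk + j$ with $0 \leq j < n$ and composing gives
\[
d(f^m(z), f^m(w)) = d\bigl(f^j(f^{nk}(z)), f^j(f^{nk}(w))\bigr) < \varepsilon,
\]
so $\{f^m\}$ is equicontinuous at $z$ and $z \in \mathrm{F}(f)$. The only real obstacle is the bookkeeping of two moduli simultaneously — uniform continuity for each $f^j$ and equicontinuity for $\{f^{nk}\}$ — and compactness of $\mathbb{P}^1(\mathbb{C})$ absorbs this without further machinery; no tool beyond the definition of equicontinuity is needed.
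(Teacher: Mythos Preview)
Your argument is correct, but it follows a different path from the paper. The paper exploits that $f$ is a polynomial: it shows $\mathrm{FJ}(f^{n})=\mathrm{FJ}(f)$ directly from the definition of the filled Julia set (bounded $f$-orbit implies bounded $f^{n}$-orbit as a subsequence; conversely, an unbounded $f$-orbit tends to the superattracting fixed point $\infty$, hence so does the $f^{n}$-orbit), and then reads off $\mathrm{J}(f^{n})=\mathrm{J}(f)$ from Proposition~\ref{Julia boundary}. Your route instead proves $\mathrm{F}(f^{n})=\mathrm{F}(f)$ straight from equicontinuity, using uniform continuity of the finitely many maps $f^{0},\dots,f^{n-1}$ on the compact space $\mathbb{P}^{1}(\mathbb{C})$ to bridge between iterates of $f^{n}$ and arbitrary iterates of $f$. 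The paper's argument is shorter and leans on the polynomial-specific structure (the filled Julia set and the attracting point at $\infty$); yours is a touch longer but uses nothing beyond the definition and works verbatim for any rational map of degree $\geq 2$, which is the form in which this fact is usually stated.
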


\begin{proof}
We first show that $\mathrm{FJ}(f^{n})=\mathrm{FJ}(f),$ then $\mathrm{J}(f^{n})=\mathrm{J}(f)$ follows immediately because it is the boundary of $\mathrm{FJ}(f).$

Suppose that $x\in\mathrm{FJ}(f),$ then the $f$-orbit of $x$ is bounded, thus so is the $f^{n}$-orbit as it is a subsequence. This implies that $x\in\mathrm{FJ}(f^{n})$ which proves $\mathrm{FJ}(f)\subseteq\mathrm{FJ}(f^{n}).$

Suppose that $x\notin\mathrm{FJ}(f),$ then the $f$-orbit of $x$ converges to $\infty$ in $\mathbb{P}^{n}(\mathbb{C})$ as $\infty$ is an attracting fixed point, so do the $f^{n}$-orbit and thus $x\notin\mathrm{FJ}(f^{n}).$
\end{proof}

The following result shows the relationship of the filled Julia set of a given polynomial and any of its affine conjugates.

\begin{lem}\label{affine conjugation}
Let $f\in\mathbb{R}[X]$ be a polynomial, and $\varphi=mX+n$ an $\mathbb{R}$-affine map. Suppose that $g=\varphi\circ f\circ\varphi^{-1},$ then we have $\mathrm{FJ}(g)=\varphi(\mathrm{FJ}(f)).$
\end{lem}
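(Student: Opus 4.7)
The plan is to reduce the statement to a direct unwinding of the definition of $\mathrm{FJ}$, using the fact that an $\mathbb{R}$-affine map $\varphi=mX+n$ with $m\neq 0$ is a homeomorphism of $\mathbb{C}$ that preserves boundedness in both directions. In particular, a subset $S\subseteq\mathbb{C}$ is bounded if and only if $\varphi(S)$ is bounded, since $|\varphi(z)|\leq |m||z|+|n|$ and $|z|\leq (|\varphi(z)|+|n|)/|m|$.

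The first step is to establish by induction on $n$ that $g^{n}=\varphi\circ f^{n}\circ\varphi^{-1}$ for every $n\geq 1$. The base case $n=1$ is the definition of $g$, and the induction step uses
$$g^{n+1}=g\circ g^{n}=(\varphi\circ f\circ\varphi^{-1})\circ(\varphi\circ f^{n}\circ\varphi^{-1})=\varphi\circ f^{n+1}\circ\varphi^{-1}.$$

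Next I would unwind membership in the filled Julia set. Let $z\in\mathbb{C}$ and set $w=\varphi^{-1}(z)$. Then the $g$-orbit of $z$ is $\{g^{n}(z)\}_{n\geq 1}=\{\varphi(f^{n}(w))\}_{n\geq 1}$. By the boundedness remark above, this sequence is bounded in $\mathbb{C}$ if and only if $\{f^{n}(w)\}_{n\geq 1}$ is bounded. Hence $z\in\mathrm{FJ}(g)$ if and only if $\varphi^{-1}(z)\in\mathrm{FJ}(f)$, which is equivalent to $z\in\varphi(\mathrm{FJ}(f))$. This yields the desired equality.

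There is no real obstacle here; the only point worth noting is that the proof requires $\varphi$ to be affine (so that $\varphi$ extends to a homeomorphism of $\mathbb{P}^{1}(\mathbb{C})$ fixing $\infty$ and preserving the property of being a bounded subset of $\mathbb{C}$). A more general Möbius conjugacy would not send $\mathrm{FJ}(f)$ to $\mathrm{FJ}(g)$ in this way, because it could send $\infty$ to a finite point and thereby scramble the distinction between bounded and unbounded orbits; the Julia set transforms correctly under any Möbius conjugacy, but the filled Julia set requires the affine hypothesis used here.
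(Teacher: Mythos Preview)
Your proof is correct and follows essentially the same approach as the paper: both unwind the definition of $\mathrm{FJ}$ via the identity $g^{n}=\varphi\circ f^{n}\circ\varphi^{-1}$ and the fact that an affine bijection preserves boundedness. Your version is more explicit (the induction and the boundedness estimates are spelled out), and your closing remark on why affineness is needed is a nice addition, but the underlying argument is the same.
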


\begin{proof}
Note that $\beta\in\mathrm{FJ}(g)$ if and only if $g^{n}(\beta)$ is bounded for all $n$ by definition. Let $\varphi$ be an $\mathbb{R}$ affine map such that $\varphi(\alpha)=\beta.$ Then $g^{n}(\varphi(\alpha))$ is bounded for all $n.$ Apply $\varphi^{-1}$ to get $f^{n}(\alpha)$ is bounded for all $n$ so $\alpha\in\mathrm{FJ}(f)$ and thus $\beta\in\varphi(\mathrm{FJ}(f)).$ The reverse inclusion is proven in a similar manner as $\varphi$ is bijective.
\end{proof}

\begin{prop} \label{odd negative}
Let $f\in\mathbb{R}[X]$ have odd degree $d\geq3$ with negative lead coefficient. Then $\mathrm{J}(f)\subseteq\mathbb{R}$ if and only if $\mathrm{Fix}(f^{2},\mathbb{R})\subseteq\mathrm{I}(f^{2}).$
\end{prop}

\begin{proof}
Since $d$ is odd, it follows that $f^{2}$ has odd degree with positive lead coefficient, so we apply Proposition \ref{odd positive} to $f^{2}.$ Since $\mathrm{J}(f^{2})=\mathrm{J}(f)$ from Lemma \ref{Julia set iterates} with $n=2,$ it follows that $\mathrm{J}(f^{2})=\mathrm{J}(f)\subseteq\mathbb{R}$ if and only if $\mathrm{Fix}(f^{2},\mathbb{R})\subseteq\mathrm{I}(f^{2}).$
\end{proof}

Here are some examples of Julia sets of an odd degree polynomial with negative lead coefficient.

\begin{exam}
Consider when $d=3$ the polynomial $f=-X^{3}+3X,$ the negative of the degree $3$ Chebyshev polynomial. The fixed points of $f^{2}$ are roots of $$f^{2}(X)-X=X(X^{2}-2)(X^{2}-4)(X^{4}-3X^{2}+1),$$ it follows that $$X\in\{0,\pm2,\pm\sqrt{2},(\pm1\pm\sqrt{5})/2\}.$$ By the chain rule, the critical points of $f^{2}(X)$ are the roots of $f^{\prime}(f(X))f^{\prime}(X)=0.$ This happens if and only if either $X=\pm1$ or $f(X)=\pm1.$ For the former $f^{2}(X)=\pm2,$ and the latter $f^{3}(X)=\pm2.$ It follows that the critical interval is $\mathrm{I}(f^{2})=[-2,2].$ Since all fixed points of $f^{2}$ are in $I(f^{2}),$ it follows that $\mathrm{J}(f)$ is real.
\end{exam}

\begin{exam}
Consider when $d=3$ the polynomial $f=-X^{3}+2X,$ the negative of the degree $3$ polynomial $X^{3}-2X.$ The fixed points of $f^{2}$ are roots of $$f^{2}(X)-X=X(X^{2}-1)(X^{2}-3)(X^{4}-2X^{2}+1)=X(X^{2}-1)^{3}(X^{2}-3),$$ it follows that $$X\in\{0,\pm1,\pm\sqrt{3}\}.$$ By the chain rule, the critical points of $f^{2}(X)$ are the roots of $f^{\prime}(f(X))f^{\prime}(X)=0.$ This happens if and only if either $X$ or $f(X)$ is equal to the critical point $\pm\sqrt{\frac{2}{3}}.$ For the former case $f^{2}(X)=\pm\frac{4}{3}\sqrt{\frac{2}{3}},$ and the latter case $$f^{3}(X)=\pm\frac{44}{27}\sqrt{\frac{2}{3}}<f^{2}(X),$$ it follows that the critical interval $$\mathrm{I}(f^{2})=\left[-\frac{44}{27}\sqrt{\frac{2}{3}},\frac{44}{27}\sqrt{\frac{2}{3}}\right]\ \text{is properly contained in}\ \mathrm{I}(f).$$ Since $\sqrt{3}>\frac{44}{27}\sqrt{\frac{2}{3}}$ which is not in $\mathrm{I}(f^{2}),$ it follows that $\mathrm{J}(f)$ is not contained in $\mathbb{R}.$
\end{exam}

\begin{prop} \label{even positive}
Let $f\in\mathbb{R}[X]$ have even degree $d\geq2$ with positive lead coefficient. Then $\mathrm{J}(f)\subseteq\mathbb{R}$ if and only if $f$ has at least one real fixed point and $[a_{1},a_{2}]\subseteq\mathrm{I}(f),$ where $a_{2}$ is the largest real fixed point, and $a_{1}$ is the smallest real root of $f(X)=a_{2}.$
\end{prop}

\begin{proof}
Suppose $\mathrm{J}(f)\subseteq\mathbb{R}$ then $a_{1},a_{2}\in\PrePer(f,\mathbb{R})\subseteq\mathrm{I}(f)$ by Theorem \ref{Julia equivalence}.

Suppose now that $\mathrm{Fix}(f,\mathbb{R})\subseteq\mathrm{I}(f),$ we prove that $\PrePer(f,\mathbb{R})$ is nonempty and is contained in the critical interval $\mathrm{I}(f).$ This implies $\mathrm{J}(f)\subseteq\mathbb{R}$ by Theorem \ref{Julia equivalence}. From hypothesis $f$ has at least one real fixed point, this means $\PrePer(f,\mathbb{R})\ne\varnothing.$ Also $\PrePer(f,\mathbb{R})$ has at least one non-exceptional point as proven in Theorem \ref{Julia equivalence}. The claim $\lim_{n\rightarrow\infty}f^{n}(x)=+\infty$ for all $x>a_{2}$ follows immediately from Lemma \ref{geometric argument}. To prove $\lim_{n\rightarrow\infty}f^{n}(x)=+\infty$ for all $x<a_{1},$ note that for any $x<a_{1}$ one has that $f(x)>a_{2}$ because $\lim_{x\rightarrow-\infty}f(x)=+\infty$ and  $a_{1}$ is the smallest real root of $f(X)=a_{2}.$ Thus $\PrePer(f,\mathbb{R})\subseteq[a_{1},a_{2}]\subseteq\mathrm{I}(f)$ and $\mathrm{J}(f)\subseteq\mathbb{R}$ by Theorem \ref{Julia equivalence}.
\end{proof}

The following is an application of Proposition \ref{even positive}. The boundary $c=-2$ corresponds to the degree $2$ Chebyshev map, it is the smallest real number that is in Mandelbrot set.

\begin{cor}
Let $f=X^{2}+c$ for some $c\in\mathbb{R}.$ Then $\mathrm{J}(f)\subseteq\mathbb{R}$ if and only if $c\leq-2.$
\end{cor}

\begin{proof} The two fixed points of $f$ are $\frac{1-\sqrt{1-4c}}{2}$ and $\frac{1+\sqrt{1-4c}}{2}$ by the quadratic formula. Without loss of generality, assume $c\leq\frac{1}{4},$ as if $c>\frac{1}{4},$ then $f$ has non-real fixed points, and therefore $\mathrm{J}(f)\nsubseteq\mathbb{R}$ by Theorem \ref{Julia equivalence}. Now as $f$ is an even function, the other root of $X^{2}+c=\frac{1+\sqrt{1-4c}}{2}$ is equal to $-\frac{1+\sqrt{1-4c}}{2}.$ The critical interval is $\mathrm{I}(f)=[c,\infty),$ so using Proposition \ref{even positive} we thus have 
\begin{equation}
\begin{split}
\mathrm{J}(f)\subseteq\mathbb{R}\Leftrightarrow&-\frac{1+\sqrt{1-4c}}{2}\geq c\\\Leftrightarrow&-1-2c\geq\sqrt{1-4c}\\
\Leftrightarrow&(1+2c)^{2}\geq1-4c\ \text{and}\ -1-2c\geq0\\
\Leftrightarrow&4c^{2}+8c\geq0\ \text{and}\ c\leq-1/2\\
\Leftrightarrow&4c(c+2)\geq0\ \text{and}\ c\leq-1/2\\
\Leftrightarrow&c\leq-2.
\end{split}
\end{equation}
It follows that $\mathrm{J}(f)\subseteq{\mathbb{R}}$ if and only if $c\leq-2.$
\end{proof}

The following result gives a criterion for an even degree polynomial with negative leading coefficient to have real Julia set.

\begin{prop} \label{even negative}
Let $f\in\mathbb{R}[X]$ have even degree $d\geq2$ of negative lead coefficient. Then $\mathrm{J}(f)\subseteq\mathbb{R}$ if and only if $f$ has at least one real fixed point and $[a_{1},a_{2}]\subseteq\mathrm{I}(f),$ where $a_{1}$ is the smallest real fixed point, and $a_{2}$ is the largest real root of $f(X)=a_{1}.$
\end{prop}

\begin{proof}
Let $g(X)=-f(-X)$ be obtained by conjugating $f$ with the map $X\mapsto-X,$ then $g$ has positive lead coefficient with at least one real fixed point, so we can apply Proposition \ref{even positive} to it. If $b_{1}$ and $b_{2}$ are smallest and largest real fixed points of $g,$ one has $b_{1}=-a_{1}$ and $b_{2}=-a_{2}.$ It follows that $\mathrm{J}(g)\subseteq\mathbb{R}$ if and only if $[b_{1},b_{2}]\subseteq\mathrm{I}(g).$ The result follows from Lemma \ref{affine conjugation} as $\varphi(\mathrm{I}(f))=\mathrm{I}(g)$ and $\varphi([a_{1},a_{2}])=[b_{1},b_{2}].$
\end{proof}

\section{Real positive cubic polynomials with positive lead coefficient}
In this section, we describe which real cubic polynomials with positive leading coefficient have real Julia set.

\begin{prop} \label{two class cubics}
If $f=c_{3}X^{3}+c_{2}X^{2}+c_{1}X+c_{0}$ is a cubic in $\mathbb{R}[X]$ such that $c_{3}\ne0,$ then there exists a real affine automorphism $\varphi=hX+k\in\mathbb{R}[X]$ with $h\ne0$ such that $\varphi\circ f\circ\varphi^{-1}$ is conjugate to one of $X^{3}+AX+B$ or $-X^{3}+AX+B$ for some $A,B\in\mathbb{R}.$
\end{prop}

\begin{proof}
We observe that the conjugation yields  
\begin{equation}
\begin{split}
\varphi\circ f\circ\varphi^{-1}(X)=&hc_{3}\left(\frac{X-k}{h}\right)^{3}+hc_{2}\left(\frac{X-k}{h}\right)^{2}+hc_{1}\left(\frac{X-k}{h}\right)+hc_{0}+k\\=&c_{3}h^{-2}(X-k)^{3}+c_{2}h^{-1}(X-k)^{2}+c_{1}(X-k)+hc_{0}+k\\=&c_{3}h^{-2}X^{3}+(c_{2}h^{-1}-3c_{3}h^{-2}k)X^{2}+\cdots+hc_{0}+k.
\end{split}
\end{equation}
Choose $h\in\mathbb{R}$ such that $h^{2}=\pm c_{3}$ depending on whether $c_{3}$ is positive or negative. Then with this $h$ chosen, we set $k=hc_{2}/(3c_{3})\in\mathbb{R}$ which proves the proposition.
\end{proof}

In view of Propposition \ref{two class cubics}, in order to characterize real cubic polynomials with real Julia set, it suffices to look at the two forms $X^{3}+AX+B$ and $-X^{3}+AX+B,$ we treat only the first case in this paper. The second case would be interesting to treat as well. Let $f=f_{A,B}=X^{3}+AX+B$ and define $$\mathcal{R}=\{(A,B)\in\mathbb{R}^{2}:\mathrm{J}(f_{A,B})\subseteq\mathbb{R}\}.$$ The goal is to describe $\mathcal{R}$ as explicitly as possible. We invoke Theorem \ref{Julia equivalence} that for $\mathrm{J}(f)$ to be real, it is required that $\mathrm{I}(f)$ to be nonempty, which in turn requires the existence of real critical points. For purposes of calculating $\mathcal{R},$ if we suppose that $A=-3a^{2}\leq0$ for some real number $a\geq0,$ such that the critical points of $f$ are $\pm a,$ it follows that $\mathrm{I}(f)=[B-2a^{3},B+2a^{3}].$ In particular $(A,B)\notin\mathcal{R}$ whenever $A>0.$

\begin{lem}
With notation as above, $(A,B)\in\mathcal{R}$ if and only if $(A,-B)\in\mathcal{R}.$
\end{lem}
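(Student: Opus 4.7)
The plan is to exploit the real affine conjugation $\varphi(X)=-X,$ which sends $f_{A,B}$ directly to $f_{A,-B}.$ Concretely, I would first compute
\begin{equation*}
\varphi\circ f_{A,B}\circ\varphi^{-1}(X)=-f_{A,B}(-X)=-\bigl((-X)^{3}+A(-X)+B\bigr)=X^{3}+AX-B=f_{A,-B}(X),
\end{equation*}
so $f_{A,B}$ and $f_{A,-B}$ are $\mathbb{R}$-affinely conjugate via the involution $\varphi.$

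Next I would invoke Lemma \ref{affine conjugation}, which gives $\mathrm{FJ}(f_{A,-B})=\varphi(\mathrm{FJ}(f_{A,B}))=-\mathrm{FJ}(f_{A,B}).$ Taking boundaries in $\mathbb{P}^{1}(\mathbb{C})$ and using Proposition \ref{Julia boundary}, this descends to $\mathrm{J}(f_{A,-B})=-\mathrm{J}(f_{A,B}).$ Since the map $z\mapsto -z$ preserves $\mathbb{R},$ one immediately gets $\mathrm{J}(f_{A,B})\subseteq\mathbb{R}$ if and only if $\mathrm{J}(f_{A,-B})\subseteq\mathbb{R},$ which is exactly the claim $(A,B)\in\mathcal{R}\Leftrightarrow(A,-B)\in\mathcal{R}.$

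There is no real obstacle here; the statement is a symmetry observation, and the only substantive input is Lemma \ref{affine conjugation} together with the fact that the negation map fixes $\mathbb{R}$ setwise. The only thing worth double-checking is that $\varphi$ is genuinely an $\mathbb{R}$-affine automorphism (it is, with $h=-1,$ $k=0$), so that Lemma \ref{affine conjugation} applies verbatim.
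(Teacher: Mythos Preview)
Your proof is correct and matches the paper's approach: both conjugate $f_{A,B}$ by $X\mapsto -X$ to obtain $f_{A,-B}$, invoke Lemma~\ref{affine conjugation}, and conclude from the fact that negation preserves $\mathbb{R}$. Your version is slightly more explicit in passing through the filled Julia set and Proposition~\ref{Julia boundary}, but the argument is essentially identical.
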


\begin{proof}
Conjugating $f_{A,B}$ by $X\mapsto-X$ results in $f_{A,-B}.$ Lemma \ref{affine conjugation} implies that $\mathrm{J}(f_{A,-B})=-\mathrm{J}(f_{A,B})$ and fixed points are mapped to their negatives. The result now follows from Theorem \ref{Julia equivalence} because $-\mathrm{J}(f_{A,B})\subseteq\mathbb{R}$ if and only if $\mathrm{J}(f_{A,B})\subseteq\mathbb{R}.$
\end{proof}

Let $\mathcal{U}$ be the set of $(A,B)\in\mathbb{R}^{2}$ such that the polynomial $f_{A,B}=X^{3}+AX+B$ has $3$ distinct real fixed points, equivalently it is the set $$\mathcal{U}=\{(A,B)\in\mathbb{R}^{2}:\mathrm{disc}(f_{A,B}(X)-X)>0\}.$$ Thus $\mathcal{U}$ is open in $\mathbb{R}^{2}$ and for all $(A,B)\in\bar{\mathcal{U}},$ the polynomial $f_{A,B}$ has $3$ real fixed points counted with multiplicity, we also have $\mathcal{R}\subseteq\bar{\mathcal{U}}.$

\begin{lem} \label{fixed point decrease}
Suppose $f=X^{3}-3a^{2}X+B$ is a cubic polynomial in $\mathbb{R}[X]$ with $B\geq0.$ Let $A=-3a^{2}$ and suppose that $(A,B)\in\bar{\mathcal{U}},$ which ensures all fixed points are real. Let $\alpha_{1}(B)$ and $\alpha_{2}(B)$ be smallest and largest fixed points of $f$ as functions on $B.$ Then both $\alpha_{1}$ and $\alpha_{2}$ decrease as $B$ increases. 
\end{lem}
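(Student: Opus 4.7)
The plan is to rewrite the fixed-point equation $f(X) = X$ as $h(X) = -B,$ where $h(X) = X^{3} - (3a^{2}+1)X,$ isolating the parameter $B$ on the right-hand side. Then the three fixed points of $f$ are exactly the real preimages of $-B$ under the fixed cubic $h,$ and studying how they depend on $B$ becomes a monotonicity question about branches of $h^{-1}.$

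Computing $h'(X) = 3X^{2} - (3a^{2}+1)$ identifies the critical points $\pm c$ with $c = \sqrt{(3a^{2}+1)/3};$ the cubic $h$ is strictly increasing on $(-\infty,-c],$ strictly decreasing on $[-c,c],$ and strictly increasing on $[c,\infty),$ with local maximum $h(-c) > 0$ and local minimum $h(c) < 0.$ For $(A,B) \in \bar{\mathcal{U}},$ the three real solutions of $h(X) = -B$ (counted with multiplicity) distribute one into each of these three intervals, so $\alpha_{1}(B)$ lies in $(-\infty,-c]$ and $\alpha_{2}(B)$ lies in $[c,\infty).$ On each outer branch $h$ is a continuous strictly increasing bijection onto its image, and therefore admits an increasing branch inverse $h_{\ell}^{-1}$ and $h_{r}^{-1}$ respectively. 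Writing $\alpha_{1}(B) = h_{\ell}^{-1}(-B)$ and $\alpha_{2}(B) = h_{r}^{-1}(-B),$ the conclusion is immediate: as $B$ increases, $-B$ decreases, and applying an increasing function preserves the direction, so both $\alpha_{1}(B)$ and $\alpha_{2}(B)$ decrease. Equivalently, implicit differentiation of $h(\alpha_{i}(B)) = -B$ gives $\alpha_{i}'(B) = -1/h'(\alpha_{i}) < 0$ whenever $\alpha_{i} \ne \pm c.$

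The main obstacle is the boundary locus $\bar{\mathcal{U}} \setminus \mathcal{U},$ where $h(X) = -B$ has a repeated root at one of the critical points $\pm c$ and $h'$ vanishes. The hypothesis $B \geq 0$ is exactly what rules out $\alpha_{1} = -c,$ since that would require $-B = h(-c) > 0,$ i.e.\ $B < 0;$ hence $\alpha_{1} < -c$ strictly throughout, and $\alpha_{1}$ depends smoothly and decreasingly on $B.$ The remaining boundary case $\alpha_{2} = c$ occurs only at the single extremal value $B = -h(c),$ and the strict monotonicity on $\mathcal{U}$ extends to this endpoint by the continuity of $\alpha_{2}$ in $B.$ This is the only delicate point; the rest of the argument is routine.
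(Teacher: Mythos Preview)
Your argument is correct and essentially the same as the paper's: both reduce to implicit differentiation of the fixed-point relation $\alpha_i^3-(3a^2+1)\alpha_i+B=0$ to obtain $\alpha_i'(B)=-1/(3\alpha_i^2-3a^2-1)$ and then check the denominator is positive. The only real difference is in that last step: the paper argues $f'(\alpha_i)=3\alpha_i^2-3a^2>1$ by appealing to Lemma~\ref{geometric argument} (the outermost fixed points repel), whereas you obtain $3\alpha_i^2>3a^2+1$ directly from the Rolle-type observation that the smallest and largest roots of a real cubic lie strictly outside its two critical points; your route is more elementary and also lets you treat the boundary of $\bar{\mathcal{U}}$ explicitly, which the paper glosses over.
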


\begin{proof}
We will show that $\alpha_{i}(B)$ is decreasing in $B$ for $i=1,2.$ Since $\alpha_{i}(B)$ is a fixed point, it satisfies $\alpha_{i}^{3}(B)-(3a^{2}+1)\alpha_{i}(B)+B=0.$ Now $\alpha_{i}(B)$ are differentiable as functions on $B$ for all $(A,B)\in\mathcal{U},$ because real roots of a monic polynomial with distinct real roots are differentiable functions of the coefficients, as an application of the implicit function theorem. Differentiate with respect to $B$ so that $$\alpha_{i}^{\prime}(B)(3\alpha_{i}^{2}(B)-3a^{2}-1)=-1.$$ Since $f^{\prime}(\alpha_{i}(B))=3\alpha_{i}^{2}(B)-3a^{2}>1$ by Lemma \ref{geometric argument}, it follows that $\alpha_{i}^{\prime}(B)<0.$
\end{proof}

\begin{prop}
Suppose $f(X)=X^{3}-3a^{2}X+B$ is a cubic in $\mathbb{R}[X]$ with $B\geq0.$
\begin{enumerate}
\item Suppose that $0\leq a<1,$ one has that $\mathrm{J}(f)\nsubseteq\mathbb{R}$ for all $B\geq0.$
\item Let $A=-3a^{2}$ for $a\geq1$ and $B\geq0.$ Let $B_{0}>0$ be the unique positive real number with $\alpha_{1}(B_{0})=B_{0}-2a^{3},$ where $\alpha_{1}(B)$ is defined in Lemma \ref{fixed point decrease}. Then $\mathrm{J}(f)\subseteq\mathbb{R}$ for all $B\leq B_{0},$ and that $\mathrm{J}(f)\nsubseteq\mathbb{R}$ for all $B>B_{0}.$
\end{enumerate}
\end{prop}

\begin{proof}
Let $\alpha_{1}(B)$ and $\alpha_{2}(B)$ be the smallest and the largest fixed point of $f\in\mathbb{R}[X].$ Let $\beta_{1}(B)=B-2a^{3}$ and $\beta_{2}(B)=B+2a^{3}$ be the critical values of $f$ at its critical points $a$ and $-a$ respectively. Then we have $\mathrm{J}(f)\subseteq\mathbb{R}$ if and only if $\beta_{1}(B)\leq\alpha_{1}(B)$ and $\alpha_{2}(B)\leq\beta_{2}(B)$ because $[\alpha_{1},\alpha_{2}]\subseteq[\beta_{1},\beta_{2}].$

It is clear $\beta_{1}(B)=B-2a^{3}$ and $\beta_{2}(B)=B+2a^{3}$ are increasing functions on $B.$ Thus the fixed points $\alpha_{1}(B),\alpha_{2}(B)$ are decreasing functions on $B$ by Lemma \ref{fixed point decrease}, so it suffices to consider the cases when $\alpha_{1}(B)=\beta_{1}(B)$ and $\alpha_{2}(B)=\beta_{2}(B).$

Suppose that $0\leq a<1,$ then $X^{3}-(3a^{2}+1)X=0$ has roots $0$ and $\pm\sqrt{1+3a^{2}}.$ Now $(A,0)\in\mathcal{R}$ if and only if $\pm\sqrt{1+3a^{2}}\in[-2a^{3},2a^{3}].$ This holds if and only if 
\begin{equation}
\begin{split}
\sqrt{1+3a^{2}}\leq2a^{3}\Leftrightarrow&1+3a^{2}\leq4a^{6}\\\Leftrightarrow&4a^{6}-3a^{2}+1\geq0\\\Leftrightarrow&(a^{2}-1)(2a^{2}+1)^{2}\geq0.
\end{split}
\end{equation}
Now $2a^{2}+1$ is positive for all $a\in\mathbb{R},$ so the inequality holds if and only if $a^{2}-1\geq0,$ contradicting the assumption $0\leq a<1.$ As $\alpha_{1}(B)$ decreases and $\beta_{1}(B)$ increases, it follows that $\alpha_{1}(B)<\beta_{1}(B)$ for all $B$ and thus $(A,B)\notin\mathcal{R}$ for all $B\geq0.$

Suppose now that $a\geq1,$ then we have that $\beta_{1}(0)\leq\alpha_{1}(0)$ and $\alpha_{2}(0)\leq\beta_{2}(0)$ holds by an argument as above. It is always true that $\alpha_{2}(B)\leq\beta_{2}(B)$ for all $B\geq0$ as $\beta_{2}$ is increasing and $\alpha_{2}$ is decreasing. Since $\alpha_{1}(B)-\beta_{1}(B)$ is a decreasing and continuous function in $B$ and $\alpha_{1}(B)\rightarrow-\infty$ as $B\rightarrow\infty,$ it follows that there is $B_{0}\geq0$ maximum with $\beta_{1}(B_{0})=\alpha_{1}(B_{0}).$ The decreasing property of the map $\alpha_{1}(B_{0})-\beta_{1}(B_{0})$ implies $\beta_{1}(B)>\alpha_{1}(B)$ if $B>B_{0}$ and $\beta_{1}(B)<\alpha_{1}(B)$ if $B<B_{0}.$ This implies that $\mathrm{J}(f)\subseteq\mathbb{R}$ if $B\leq B_{0}$ and $\mathrm{J}(f)\nsubseteq\mathbb{R}$ if $B>B_{0}$ by Theorem \ref{Julia equivalence}. 
\end{proof}

The following result gives a condition when the Julia set of the polynomial $X^{3}+AX+B$ is contained in $\mathbb{R}.$

\begin{thm} \label{gray region}
Let $(A,B)\in\mathbb{R}^{2}$ and let $f_{A,B}=X^{3}+AX+B$ be its associated cubic. Then $\mathrm{J}(f_{A,B})\subseteq\mathbb{R}$ if and only if $A\leq-3$ and $B^{2}\leq-4A(A+3)^{2}/27.$
\end{thm}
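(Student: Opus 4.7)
The plan is to render the threshold $B_0$ from the preceding proposition explicit by a single factorization, and then translate back to the pair $(A,B)$. First I would dispatch the easy range of $A$: if $A>0$, then $f_{A,B}'(X)=3X^2+A$ has no real zeros, so $\mathrm{I}(f_{A,B})=\varnothing$ and Theorem \ref{Julia equivalence} forces $\mathrm{J}(f_{A,B})\not\subseteq\mathbb{R}$. For $A\leq0$ I write $A=-3a^2$ with $a\geq0$, so the critical points of $f_{A,B}$ are $\pm a$ and $\mathrm{I}(f_{A,B})=[B-2a^3,\,B+2a^3]$. The symmetry lemma $(A,B)\in\mathcal{R}\iff(A,-B)\in\mathcal{R}$ lets me restrict to $B\geq0$. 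The preceding proposition then gives a clean dichotomy: for $0\leq a<1$ (equivalently $-3<A\leq0$) one has $\mathrm{J}(f_{A,B})\not\subseteq\mathbb{R}$; and for $a\geq1$ (equivalently $A\leq-3$) one has $\mathrm{J}(f_{A,B})\subseteq\mathbb{R}$ if and only if $B\leq B_0$, where $B_0$ is the nonnegative number satisfying $\alpha_1(B_0)=B_0-2a^3$.

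The core of the argument is to prove $B_0=2a(a^2-1)$. I would verify this via the explicit factorization
\begin{equation*}
X^3-(3a^2+1)X+2a(a^2-1)=(X+2a)\bigl(X-(a-1)\bigr)\bigl(X-(a+1)\bigr),
\end{equation*}
which is checked by direct expansion. The left-hand side is exactly $f_{A,B}(X)-X$ at $B=2a(a^2-1)$, so the three fixed points there are $-2a,\,a-1,\,a+1$. Since $a\geq1$ one has $-2a\leq a-1\leq a+1$, so $\alpha_1=-2a$, and the identity $-2a=2a(a^2-1)-2a^3$ matches the defining relation $\alpha_1(B_0)=B_0-2a^3$. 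By the uniqueness clause in the preceding proposition this pins down $B_0=2a(a^2-1)$.

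Finally I would convert back to $(A,B)$. Using $a^2=-A/3$ and $(a^2-1)^2=(A+3)^2/9$,
\begin{equation*}
B_0^2=4a^2(a^2-1)^2=\frac{-4A(A+3)^2}{27},
\end{equation*}
and combining $0\leq B\leq B_0$ with the symmetry $B\mapsto-B$ collapses the two halves into the single inequality $B^2\leq-4A(A+3)^2/27$, valid precisely when $A\leq-3$. The only nonroutine step is spotting the factorization; it can be motivated by substituting $\alpha_1=B-2a^3$ into the fixed-point polynomial $\alpha_1^3-(3a^2+1)\alpha_1+B=0$, which produces a cubic in $B$ having $B=2a(a^2-1)$ as its root of interest.
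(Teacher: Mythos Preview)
Your argument is correct and reaches the same threshold $B_0=2a(a^2-1)$ as the paper, and the conversion to $B^2\leq -4A(A+3)^2/27$ is identical. The overall strategy is also the same: identify the boundary condition $\alpha_1(B_0)=B_0-2a^3$ and solve it.

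The one substantive difference is the factorization used at the pivotal step. The paper factors the equation $f_{A,B}(X)=B-2a^3$, obtaining $X^3-3a^2X+2a^3=(X-a)^2(X+2a)$; since this is independent of $B$, it shows that the smallest preimage of the left critical value is always $-2a$, and then the boundary condition becomes $-2a=B-2a^3$. You instead guess the threshold $B=2a(a^2-1)$ and factor the \emph{fixed-point} polynomial $f_{A,B}(X)-X$ there, reading off $\alpha_1=-2a$ directly. Your route leans explicitly on the preceding proposition (and the symmetry lemma) to supply the dichotomy and the uniqueness of $B_0$, which makes the logic cleaner; the paper's version argues more directly from Proposition~\ref{odd positive} and is terser about why only the lower endpoint condition is binding. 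Either factorization determines the other, so the two arguments are essentially dual views of the same computation.
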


\begin{proof}
Note $\mathrm{J}(f_{A,B})\subseteq\mathbb{R}$ if and only if $\mathrm{Fix}(f_{A,B},\mathbb{R})\subseteq\mathrm{I}(f_{A,B})$ by Proposition \ref{odd positive}. Set $A=-3a^{2}$ for some $a\geq0.$ So $f_{A,B}$ has critical values $B\mp2a^{2}$ at the points $X=\pm a,$ it follows that the critical interval is $[B-2a^{3},B+2a^{3}].$ Suppose that all fixed points are in the critical interval. The equation $X^{3}-3a^{2}X+B=B-2a^{3}$ factorizes $$X^{3}-3a^{2}X+2a^{3}=(X-a)^{2}(X+2a).$$ So $B-2a^{3}\leq -2a$ if the smallest fixed point is in the critical interval and thus $$B\leq 2a^{3}-2a=2a(a^{2}-1),$$ $$B^{2}\leq4a^{2}(a^{2}-1)^{2}=(-4/27)A(A+3)^{2}.$$
\end{proof}

\section{Lattès maps}
The main result in this section is an application of Theorem \ref{archimedean version} to establish a non-abelian result for the arboreal Galois group of certain Lattès maps. First we give an introduction to Lattès maps closely following Silverman \cite{MR2316407} Section $6.4.$

\begin{defn}
Let $C$ be a nonsingular curve defined over $\mathbb{C}.$ A \emph{uniformizer} at $P\in C_{1}$ is a rational function $t_{P}:C\rightarrow\mathbb{P}^{1}$ on $C$ that vanishes to order $1$ at $P.$
\end{defn}

\begin{defn}
Let $\varphi:C_{1}\rightarrow C_{2}$ be a nonconstant map between nonsingular curves $C_{1}$ and $C_{2}$ defined over $\mathbb{C}.$ The \emph{ramification index} $e_{\varphi}(P)$ at $P\in C_{1}$ is the multiplicity of $P$ as a solution to $\varphi(x)=Q$ as an equation in $x,$ where $Q=\varphi(P)$ is the image of $P$ under the nonconstant morphism $\varphi.$ To be precise $$e_{\varphi}(P)=\nu_{P}(t_{Q}\circ\varphi)$$ where $t_{P}$ is a uniformizer at $P,$ and $\nu_{P}$ is the order of the uniformizer $t_{P}\circ\varphi$ at $P.$ We say $\varphi$ is \emph{ramified} at $P$ if $e_{\varphi}(P)\geq2.$
\end{defn}

\begin{prop} \label{ramification multiplicative}
Let $\varphi:C_{1}\rightarrow C_{2}$ and $\psi: C_{2}\rightarrow C_{3}$ be nonconstant maps between three smooth curves $C_{1},C_{2},C_{3}.$
\begin{enumerate}
\item For all points $Q\in C_{2},$ it holds that $$\sum_{P\in\varphi^{-1}(Q)}e_{\varphi}(P)=\deg(\varphi).$$
\item For all points $P\in C_{1},$ the ramification index is multiplicative $$e_{\psi\circ\varphi}(P)=e_{\psi}(\varphi(P))e_{\varphi}(P).$$
\end{enumerate}
\end{prop}

\begin{proof}
This is proven in Silverman \cite{silverman:aec} Proposition $2.6.$
\end{proof}

\begin{prop} \label{Riemann Hurwitz}
If $E$ is an elliptic curve and if $\varphi:E\rightarrow E$ is an endomorphism, then $\varphi$ is unramified at all $P\in E.$
\end{prop}

\begin{proof}
It follows immediately from the Riemann-Hurwitz formula that $$\sum_{P\in E}(e_{\varphi}(P)-1)=0,$$ which is proven in Silverman \cite{MR2316407} Theorem $1.5,$ as $E$ is an elliptic curve of genus $1.$ Therefore the ramification index $e_{\varphi}(P)=1$ for all points $P\in E.$
\end{proof}

\begin{defn}
Let $E$ be an elliptic curve defined over $\mathbb{C}.$ The group of all complex \emph{$n$-torsion} points in $E(\mathbb{C})$ is denoted by $E[n].$ If in addition $E$ is defined over $\mathbb{R},$ the subgroup of all \emph{real $n$-torsion} points in $E(\mathbb{C})$ is denoted by $E[n](\mathbb{R}).$
\end{defn}

\begin{exam}
Let $E$ be an elliptic curve given by the Weierstrass equation $$y^{2}=F(x)=x^{3}+ax^{2}+bx+x.$$ Recall $x:E\rightarrow\mathbb{P}^{1}$ is the $x$-coordinate of the elliptic curve. If $P$ is a point on $E,$ then the classical formula for $x([2]P)$ yields the Lattès map $$f(X)=x([2]P)=\frac{X^{4}-2bX^{2}-8cX+b^{2}-4ac}{4(X^{3}+aX^{2}+bX+c)}=\frac{F^{\prime}(X)^{2}-(8X+4a)F(X)}{4F(X)},$$ satisfying $x\circ[2]=f\circ x$ such that the following diagram commutes 
\begin{equation}
\begin{tikzcd}
E\arrow[r,"{[2]}"]\arrow[d,"x"]&E\arrow[d,"x"]\\\mathbb{P}^{1}\arrow[r,"f"]&\mathbb{P}^{1},
\end{tikzcd}
\end{equation} proof is found in Silverman \cite{silverman:ataec} pages $59$ and $46,$ where $\psi(P)=[2]P$ is duplication, and $\pi$ is given by the $x$-coordinate function $\pi(P)=\pi(x,y)=x.$
\end{exam}

\begin{prop} \label{elliptic curve negative discriminant}
Let $E$ be an elliptic curve over $\mathbb{R},$ let $y^{2}=F(x)$ be a Weierstrass equation for $E,$ where $F\in\mathbb{R}[x]$ is a real cubic polynomial. Suppose that $\mathrm{disc}(F)<0.$ If $f:\mathbb{P}^{1}\rightarrow\mathbb{P}^{1}$ is a Lattès map associated to the duplication map $[2]:E\rightarrow E$ and $x:E\rightarrow\mathbb{P}^{1}$ with $x\circ[2]=f\circ x,$
then $f:\mathbb{P}^{1}(\mathbb{R})\rightarrow\mathbb{P}^{1}(\mathbb{R})$ is surjective as an endomorphism on real projective space.
\end{prop}

\begin{proof}
The duplication $[2]:E\rightarrow E$ is unramified at all $P\in E$ by Proposition \ref{Riemann Hurwitz}. As $x(-P)=x(P)$ for all $P,$ so $x$ ramifies at $P$ if and only if $-P=P$ which is $2P=\mathcal{O}.$ So $e_{x}(P)=2$ if $P$ is in $2$-torsion $E[2],$ and $e_{x}(P)=1$ if $P$ is not in $2$-torsion $E[2].$

Now $P$ is a critical point of $x\circ[2]$ if and only if $e_{x\circ[2]}(P)=e_{x}([2]P)e_{[2]}(P)>1.$ This happens if and only if $e_{x}([2]P)=2,$ as the map $[2]$ is unramified at all $P\in E.$ In other words, when $[2]P$ is a $2$-torsion point in $E[2],$ or equivalently when $P$ is a $4$-torsion point in $E[4].$

Consider a point $P$ at which $x\circ[2]$ ramifies, hence $P$ is a $4$-torsion point in $E[4].$ From the relation $x\circ[2]=f\circ x,$ the ramification index at $P$ can be written as $$e_{x\circ[2]}(P)=e_{f\circ x}(P)=e_{f}(x(P))e_{x}(P)=2.$$

Suppose that $e_{x}(P)=2,$ in other words $P$ is a $2$-torsion point on the curve $E.$ Then $e_{f}(x(P))=1$ and $x(P)$ is not a critical point of $f.$ Suppose that $e_{x}(P)=1,$ in other words, the point $P\in E$ is in $E[4]$ but $P$ is not in $E[2].$ Then we have that $e_{f}(x(P))=2$ and thus $x(P)$ is a critical point of $f.$ Thus we have shown that $x(P)$ is a critical point of $f$ if and only if $P$ is in $E[4]$ but is not in $E[2].$

As the discriminant of $F(x)$ is negative, it follows that $F$ has only one real root. Since $E(\mathbb{R})\cong\mathbb{R}/\mathbb{Z}$ as groups by Silverman in \cite{silverman:ataec} Corollary $\mathrm{V}.2.3.1,$ this implies $E[2](\mathbb{R})$ has order $2$ and $E[4](\mathbb{R})$ has order $4.$ It follows that $E[2](\mathbb{R})=\{\mathcal{O},T\},$ where $T=(\alpha,0)$ with $\alpha$ is the unique real root of $F(x),$ and the $4$-torsion subgroup $E[4](\mathbb{R})=\{\mathcal{O},T,\pm Q\}$ where $\pm Q$ have order $4$ as $E(\mathbb{R})\cong\mathbb{R}/\mathbb{Z}.$ There are four points $P\in E$ of order $4$ such that $[2]P=T,$ two of these are $\pm Q$ having real $y$-coordinate and the other two of these, say $\pm Q^{\prime}$ have nonreal $y$-coordinate.

If $\alpha$ is the only real root of $F,$ we have that $f(x)\rightarrow\pm\infty$ as $x\rightarrow\alpha^{\pm}$ and $x\rightarrow\pm\infty.$ This follows from the formula $$f(X)=\frac{F^{\prime}(X)^{2}-(8X+4a)F(X)}{4F(X)}$$ and that $\alpha$ is the unique real root of $F.$ So the denominator $F(x)>0$ for all $x>\alpha$ and also $F(x)<0$ for all $x<\alpha,$ and $F^{\prime}(x)^{2}>0$ implies that the numerator of $f(x)$ is positive if $x$ is close to $\alpha.$ Now $f(x)$ has critical points $c_{1}$ and $c_{2}$ with $c_{1}<\alpha<c_{2},$ where we note that $c_{1}=x(\pm Q^{\prime})$ arises from the two complex $4$-torsion points $\pm Q^{\prime},$ and that $c_{2}=x(\pm Q)$ arises from the two real $4$-torsion points $\pm Q$ on the curve $E.$ The fact that $f(c_{1})=f(c_{2})$ follows from the commutative relation $f\circ x=x\circ[2]$ as  $$f(c_{1})=f\circ x(Q^{\prime})=x\circ[2](Q^{\prime})=x\circ[2](Q)=f\circ x(Q)=f(c_{2}).$$ Since $f$ takes all values $y\leq f(c_{1})$ on $(-\infty,c_{1}],$ and all values $y\geq f(c_{2})$ on $[c_{2},+\infty),$ together with $f(c_{1})=f(c_{2}),$ it follows that $f$ is surjective on $\mathbb{P}^{1}(\mathbb{R}).$
\end{proof}


\begin{exam}
The assumption $\mathrm{disc}(F)$ is negative in Proposition \ref{elliptic curve negative discriminant} is essential. Take the elliptic curve $$y^{2}=x^{3}-a^{2}x=x(x^{2}-a^{2})$$ for $a\ne0$ as an example. Thus the elliptic curve has four real $2$-torsion points and $$f(X)=\frac{(X^{2}+a^{2})^{2}}{4X(X^{2}-a^{2})}.$$ Since $f:\mathbb{P}^{1}(\mathbb{R})\rightarrow\mathbb{P}^{1}(\mathbb{R})$ never takes the value $0,$ its image omits a neighborhood of $0$ by compactness of $\mathbb{P}^{1}(\mathbb{R}).$ So $f$ is not surjective as an endomorphism on $\mathbb{P}^{1}(\mathbb{R}).$
\end{exam}

\begin{cor}
Let $K$ be a number field, and $E$ an elliptic curve defined over $K.$ Let $y^{2}=F(x)$ be a Weierstrass equation for $E,$ where $F\in\mathbb{R}[x]$ has degree three. Suppose that $\mathrm{disc}(F)<0.$  If $f:\mathbb{P}^{1}\rightarrow\mathbb{P}^{1}$ is a Lattès map associated to $[2]:E\rightarrow E$ and $x:E\rightarrow\mathbb{P}^{1}$ such that $x\circ[2]=f\circ x,$ and if $\alpha\in\mathbb{R}$ is a nonperiodic point of $f,$ then $\Gal(K_{\infty}/K)$ is not abelian.
\end{cor}

\begin{proof}
Since $f$ is surjective on $\mathbb{P}^{1}(\mathbb{R})$ by Proposition \ref{elliptic curve negative discriminant}, and the Julia set $\mathrm{J}(f)=\mathbb{P}^{1}(\mathbb{C})$ is not contained in $\mathbb{R}$ by Silverman \cite{MR2316407} Theorem $1.43,$ and $\alpha$ is not periodic, it follows that $\Gal(K_{\infty}/K)$ is not abelian by Theorem \ref{nonperiodic archimedean}.
\end{proof}

\bibliographystyle{plain}
\bibliography{CL}

\def\cprime{$'$}
\begin{thebibliography}{10}

\bibitem{arxiv2001.00659}
Jesse Andrews and Clayton Petsche.
\newblock Abelian extensions in dynamical {G}alois theory.
\newblock {\em Algebra Number Theory}, 14(7):1981--1999, 2020.

\bibitem{arxiv0407426}
Matthew Baker and Robert Rumely.
\newblock Equidistribution of small points, rational dynamics, and potential theory.
\newblock {\em Ann. Inst. Fourier (Grenoble)}, 56(3):625--688, 2006.

\bibitem{beardon:gtm}
Alan Beardon.
\newblock {\em Iteration of {R}ational {F}unctions}, volume 132 of {\em Graduate Texts in Mathematics}.
\newblock Springer-Verlag, New York, 1991.

\bibitem{MR3890051}
Robert Benedetto.
\newblock {\em Dynamics in one non-archimedean variable}, volume 198 of {\em Graduate Studies in Mathematics}.
\newblock American Mathematical Society, Providence, RI, 2019.

\bibitem{Filled-Julia}
Xavier Buff and Jasmin Raissy.
\newblock Introduction to {F}atou components in holomorphic dynamics.
\newblock {\em Hal Open Science}, pages 1--36, 2023.

\bibitem{arxiv0304023}
Antoine Chambert-Loir.
\newblock Mesures et \'equidistribution sur les espaces de {B}erkovich.
\newblock {\em J. Reine Angew. Math.}, 595:215--235, 2006.

\bibitem{arxiv0407471}
Charles Favre and Juan Rivera-Letelier.
\newblock \'{E}quidistribution quantitative des points de petite hauteur sur la droite projective.
\newblock {\em Math. Ann.}, 335(2):311--361, 2006.
\newblock Corrigendum: Math. Ann. 339 (2007), 799--801.

\bibitem{arxiv2203.10034}
Andrea Ferraguti, Alina Ostafe, and Umberto Zannier.
\newblock Cyclotomic and abelian points in backward orbits of rational functions.
\newblock {\em Adv. Math.}, 438:Paper No. 109463, 27, 2024.

\bibitem{MR3220023}
Rafe Jones.
\newblock Galois representations from pre-image trees: an arboreal survey.
\newblock In {\em Actes de la {C}onf\'erence ``{T}h\'eorie des {N}ombres et {A}pplications''}, Publ. Math. Besan\c con Alg\`ebre Th\'eorie Nr., pages 107--136. Presses Univ. Franche-Comt\'e, Besan\c con, 2013.

\bibitem{Complex-equi-distribution}
M.~Y. Lyubich.
\newblock The maximum-entropy measure of a rational endomorphism of the riemann sphere.
\newblock {\em Functional analysis and its applications}, 1982.

\bibitem{MR805714}
R.~W.~K. Odoni.
\newblock The {G}alois theory of iterates and composites of polynomials.
\newblock {\em Proc. London Math. Soc. (3)}, 51(3):385--414, 1985.

\bibitem{MR813379}
R.~W.~K. Odoni.
\newblock On the prime divisors of the sequence {$w\sb {n+1}=1+w\sb 1\cdots w\sb n$}.
\newblock {\em J. London Math. Soc. (2)}, 32(1):1--11, 1985.

\bibitem{Ostafe:intersection}
Alina Ostafe.
\newblock On roots of unity in orbits of rational functions.
\newblock {\em Proc. Amer. Math. Soc.}, 145(5):1927--1936, 2017.

\bibitem{silverman:aec}
Joseph~H. Silverman.
\newblock {\em The {A}rithmetic of {E}lliptic {C}urves}, volume 106 of {\em Graduate Texts in Mathematics}.
\newblock Springer-Verlag, New York, 1992.

\bibitem{silverman:ataec}
Joseph~H. Silverman.
\newblock {\em Advanced {T}opics in the {A}rithmetic of {E}lliptic {C}urves}, volume 151 of {\em Graduate Texts in Mathematics}.
\newblock Springer-Verlag, New York, 1994.

\bibitem{MR2316407}
Joseph~H. Silverman.
\newblock {\em The {A}rithmetic of {D}ynamical {S}ystems}, volume 241 of {\em Graduate Texts in Mathematics}.
\newblock Springer, New York, 2007.

\bibitem{MR1174401}
Michael Stoll.
\newblock Galois groups over {$\mathbb{Q}$} of some iterated polynomials.
\newblock {\em Arch. Math. (Basel)}, 59(3):239--244, 1992.

\end{thebibliography}
\end{document}